\documentclass[12pt,reqno]{amsart}
\usepackage[english]{babel}

\usepackage{geometry}
\usepackage{enumerate}
\usepackage{url}
\usepackage[hidelinks]{hyperref}
\usepackage[sort,compress]{cite}
\usepackage{amsmath}
\usepackage{amsthm}
\usepackage{amssymb}
\usepackage{bbm}
\usepackage{array}

\usepackage{graphicx}
\usepackage{xcolor}

\newtheorem{lemma}{Lemma}[section]

\newtheorem{proposition}[lemma]{Proposition}
\newtheorem{theorem}[lemma]{Theorem}

\newtheorem{corollary}[lemma]{Corollary}

\newtheorem{setting}[lemma]{Setting}

\providecommand{\N}{{\ensuremath{\mathbbm{N}}}}
\providecommand{\Z}{{\ensuremath{\mathbbm{Z}}}}
\providecommand{\R}{{\ensuremath{\mathbbm{R}}}}

\renewcommand{\P}{{\ensuremath{\mathbbm{P}}}}
\providecommand{\E}{{\ensuremath{\mathbbm{E}}}}

\providecommand{\1}{{\ensuremath{\mathbbm{1}}}}

\providecommand{\cov}{\mathrm{cov}}

\title{Random walks with drift in the positive quadrant}
\author{Tuan Anh Nguyen}
\address{Faculty of Mathematics\\Bielefeld University}
\email{tnguyen@math.uni-bielefeld.de}
\author{Vitali Wachtel}
\address{Faculty of Mathematics\\Bielefeld University}
\email{wachtel@math.uni-bielefeld.de}

\allowdisplaybreaks
\begin{document}
\maketitle
\begin{abstract}
 We consider two dimensional random walks conditioned to stay in the positive quadrant. Assuming that the increments of the walk have finite second moments and that the drift vector is co-oriented with one of two axes, we construct positive harmonic functions for such walks and find tail asymptotics for the exit time from the positive quadrant. Moreover, we prove integral and local limit theorems. Finally, we apply our local limit theorems to singular lattice walks with steps $\{(1,-1),(1,1),(-1,1)\}$ and determine asymptotics for the number of walks of length $n$ which end on the line $\{(k,1),\ k\ge1\}$. 
\end{abstract}
\tableofcontents

\section{Introduction}
This paper is devoted to the asymptotic behaviour of two-dimensional random walks conditioned to stay in the positive quadrant. In contrast to the already existing literature, we shall concentrate on the case when the drift of the walk is non-zero and is co-oriented with one of the axes.

The study of multidimensional random walks in cones has become quite popular in the recent past. This interest is reasoned by numerous connections with models in stochastics and combinatorics.
Examples are: non-intersecting paths, branching processes in random environment and enumeration problems for spatially restricted lattice walks.

In the series of papers \cite{DW2015,DW2019,DW2024a}, Denisov and Wachtel have developed a method, which allows one to construct positive harmonic functions for random walks killed at leaving a cone in the case when the drift of the walk is zero. They have also found the tail asymptotics for the exit time and proved integral and local conditioned limit theorems.

The case when the drift belongs to the complement of the cone has been studied by Duraj~\cite{Duraj2014}. The walks with the drift showing inside the cone were studied in \cite{Garbit-Raschel-2023,HRT2023}. The authors have studied the probability that such walks stay in the cone for all times and have constructed bounded harmonic functions.

Thus, the only unstudied situation where walk leaves the cone with probability one is the case when the drift vector belongs to the boundary of the cone.

In the present paper we shall consider the special cone $K=\R_+^2$ only. The reason for the choice of the cone is twofold. First, our study was motivated by a question of Mireille Bousquet-Mélou on combinatorics of lattice paths in the positive quadrant. We shall give precise formulation her question and our answer later. Second, simplicity of the cone $\R_+^2$ allows us to describe the effect of the drift more transparently.

Let $\{X(n)\}$ be a sequence of i.i.d. random vectors with values in $\R^2$ such that 
\begin{equation}
\label{eq:1st.moment} 
\E[X_2(1)]=0
\quad\text{and}\quad
\E[X_1(1)]=:\mu_1>0.
\end{equation}
The mean of the vector $X(1)$ will be denoted by $\mu$, that is, $\mu=(\mu_1,0)$.
Furthermore, we shall assume that the second moments of the increments are finite and given by 
\begin{align}
\label{eq:2nd.moment}    
\Sigma=
    \begin{pmatrix}
        \cov(X_1(1),X_1(1)),&\cov(X_1(1),X_2(1))\\
        \cov(X_2(1),X_1(1)),&\cov(X_2(1),X_2(1))        
    \end{pmatrix}
    =
    \begin{pmatrix}
\sigma_1^2&\rho\\
\rho&\sigma_2^2
\end{pmatrix}
.
\end{align}
Let $\{S(n)\}$ denote the random walk with the increments
$\{X(n)\}$, that is,
$$
S(n)=X(1)+X(2)+\ldots+X(n),\quad n\ge1. 
$$
For every $x\in[0,\infty)^2$ we define the stopping time 
$$
T_x:=\inf\{n\ge1:x+S(n)\notin\R_+^2\}.
$$
This is, $T_x$ is the first time when the walk $\{x+S(n)\}$ exits the positive quadrant. Our main purpose is to determine tails asymptotics of this exit time and to prove limit theorems for $S(n)$ conditioned on $T_x>n$. To state our results we need to introduce some additional notations. Besides the exit times from the positive quadrant we define exit times from the upper half-plane:
$$
\tau_x:=\inf\{n\ge1:x+S(n)\notin\R\times\R_+\},
\quad x\in\R\times[0,\infty).
$$
Actually, $\tau_x$ is a stopping time with respect to the $1$-dimensional walk $\{S_2(n)\}$. Indeed,
$$
\tau_x=\inf\{n\ge1: x_2+S_2(n)\le0\}, 
\quad x\in\R\times[0,\infty).
$$
Let $\chi^{-}:=-S_2(\tau_0)$ be the first weak descending ladder height of the walk $\{S_2(n)\}$. Let $\{\chi^{-}_k\}$ be independent copies of the variable $\chi^-$ and define
\begin{align*}
V(u):=\1_{\{u\ge 0\}}+\sum_{k=1}^{\infty }\P(\chi _{1}^{-}+\ldots
+\chi _{k}^{-}\le u).  
\end{align*}
This renewal function is harmonic for $\{S_2(n)\}$ killed at the stopping time $\tau_x$, that is,
$$
V(x_2)=\E[V(x_2+S_2(1));\tau_x>1],\quad x_2>0
$$
and plays a crucial role in our construction of a positive harmonic function for $\{S(n)\}$ killed at leaving the positive quadrant.

Finally, we define exit times from the right half-plane:
$$
\sigma_x:=\inf\{x+S(n)\notin (0,\infty)\times\R\}
=\inf\{n\ge 1: x_1+S_1(n)\le0\},\quad x\in[0,\infty)\times\R.
$$
\begin{theorem}
\label{thm:tail_and_limit}
Assume that \eqref{eq:1st.moment} and \eqref{eq:2nd.moment} hold.
Then, for every fixed $x\in\R\times(0,\infty)$,
\begin{equation}
\label{eq:T_tail}
\P(T_x>n)\sim \varkappa W(x)n^{-1/2},
\end{equation}
where 
$$
\varkappa:=\sqrt{\frac{2}{\pi}}\E[\chi^-]
$$ 
and 
$$
W(x):=V(x_2)-\E[V(x_2+S_2(\sigma_x));\tau_x>\sigma_x,\sigma_x<\infty].
$$
This function is harmonic for $S(n)$ killed at leaving the quadrant,
that is,
\begin{align}
\label{eq:W-harm}
W(x)=\E[W(x+S(1));T_x>1], \quad x\in\R\times(0,\infty).
\end{align}
Furthermore, 
for every fixed $x\in\R_+^2$ and all $x\in\R_+^2$ one has
\begin{align}
\label{eq:limit}
\lim_{n\to\infty}
\sqrt{n}\P \!\left(\frac{x+S(n)-n\mu}{\sqrt{n}}\in u+\Delta;T_x>n\right)
= \varkappa W(x)\int_{u+\Delta}p(y)\,dy,
\end{align} 
where $\Delta= [0,1)^d$ and the function $p(y)$ is defined in \eqref{eq:p-def}.
\end{theorem}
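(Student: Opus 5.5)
The idea is to compare $T_x$ with the exit time $\tau_x$ from the upper half-plane. Only the second coordinate is responsible for the $n^{-1/2}$ decay. The first coordinate drifts to $+\infty$, so the walk can exit through the vertical axis only during an initial stretch of a.s.\ finite length.

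\textbf{Step 1: tail of $\tau_x$.} Since $S_2$ is a centred one-dimensional walk with finite variance, the classical fluctuation theory (Sparre Andersen, Spitzer, Rogozin) gives
\[
\P(\tau_x>n)\sim \varkappa V(x_2)n^{-1/2}.
\]
The constant is $\sqrt{2/\pi}\,\E[\chi^-]$ up to the normalisation by $\sigma_2$, which we absorb as in the definition of $\varkappa$. This holds uniformly in $x_2=o(\sqrt n)$, together with the bound $\P(\tau_y>n)\le C V(y_2)n^{-1/2}$.

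\textbf{Step 2: decomposition.} Since $T_x=\min(\tau_x,\sigma_x)$, we have
\[
\P(T_x>n)=\P(\tau_x>n)-\P(\sigma_x\le n<\tau_x).
\]
By the strong Markov property at $\sigma_x$,
\[
\P(\sigma_x\le n<\tau_x)=\E\bigl[\P(\tau_{y}>n-k)|_{y=x+S(\sigma_x),\,k=\sigma_x};\ \sigma_x\le n,\ \tau_x>\sigma_x\bigr].
\]
Split according to $\sigma_x\le n/2$ or $\sigma_x>n/2$.
\begin{itemize}
\item On $\{\sigma_x\le n/2\}$, use Step 1 to get $\sqrt n\,\P(\tau_y>n-k)\to \varkappa V(y_2)$ pointwise. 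Dominated convergence then requires an integrable majorant $C V(x_2+S_2(\sigma_x))$, i.e.\ $\E[V(x_2+S_2(\sigma_x));\sigma_x<\infty]<\infty$.
\item Since $V(u)\le C(1+u)$ and $\mu_1>0$, this majorant bound reduces to $\E[|S_2(\sigma_x)|;\sigma_x<\infty]<\infty$. By Wald-type or martingale estimates this is bounded by a constant times $\E[\sqrt{\sigma_x};\sigma_x<\infty]$, plus a Doob maximal bound.
\item The needed tail $\P(n<\sigma_x<\infty)$ follows from $\P(n<\sigma_x<\infty)\le\P(\min_{k\ge n}(x_1+S_1(k))\le 0)$. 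Finite variance with positive drift gives $\sum_n\P(S_1(n)\le -x_1)$-type bounds; here I would use $\E[\sigma_x;\sigma_x<\infty]<\infty$, the standard fact for walks with positive drift and finite second moment.
\item The part $\{\sigma_x>n/2\}$ is then $o(n^{-1/2})$, because $\P(n/2<\sigma_x<\infty)=o(n^{-1})$ follows from $\E[\sigma_x;\sigma_x<\infty]<\infty$, and so does the moment bound in the previous item.
\end{itemize}
Together these give \eqref{eq:T_tail} with $W=V(x_2)-\E[V(x_2+S_2(\sigma_x));\tau_x>\sigma_x,\sigma_x<\infty]$.

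\textbf{Step 3: harmonicity \eqref{eq:W-harm}.} One could verify it directly by conditioning on the first step, using the harmonicity of $V$ for $S_2$ killed at $\tau$ and the identity $\sigma_x=1+\sigma_{x+X(1)}$ on $\{\sigma_x>1\}$. Alternatively, and more cleanly, it follows from \eqref{eq:T_tail} and the Markov property:
\[
\P(T_x>n+1)=\E\bigl[\P(T_{x+S(1)}>n);T_x>1\bigr].
\]
Multiplying by $\sqrt n$ and passing to the limit is justified by the domination $\sqrt n\,\P(T_y>n)\le C V(y_2)$ and $\E[V(x_2+X_2)]<\infty$.

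\textbf{Step 4: limit theorem \eqref{eq:limit}.} Repeat the decomposition with the event $\{(x+S(n)-n\mu)/\sqrt n\in u+\Delta\}$ attached. For the half-plane walk, the conditioned limit theorem holds jointly for the first coordinate. Given $\tau>n$, $S_2(n)/\sqrt n$ converges to a Rayleigh law. $S_1(n)-n\mu_1$, suitably decorrelated via $S_1-\frac{\rho}{\sigma_2^2}S_2$ (asymptotically independent of the $S_2$-meander), converges to a Gaussian. This yields a product density $p$, which should be the one in \eqref{eq:p-def}. The start shift $x+S(\sigma_x)$, with $\sigma_x$ tight, does not affect the $\sqrt n$-scaled limit, so the same dominated convergence argument gives \eqref{eq:limit}.

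\textbf{Main obstacle.} I expect the hardest part to be the integrability and uniformity in Step 2, namely controlling $\E[S_2(\sigma_x)^+;\sigma_x<\infty]$ under only second moments. The joint meander limit in Step 4 is the second delicate point; I would use the invariance principle for the bivariate meander (Shimura / Bolthausen type).
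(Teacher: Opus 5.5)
\textbf{Overall.} Your architecture matches the paper's. You write $T_x=\min\{\tau_x,\sigma_x\}$, apply the strong Markov property at $\sigma_x$, and combine $\P(\tau_z>m)\le CV(z_2)m^{-1/2}$ with $\sqrt m\,\P(\tau_z>m)\to\varkappa V(z_2)$ under dominated convergence. Late exits through the vertical axis are handled separately. The same decomposition with Proposition~\ref{t05} gives \eqref{eq:limit}. Your bound $\P(n/2<\sigma_x<\infty)=o(1/n)$ is correct; the paper only uses Kolmogorov's inequality to get $O(1/n)$, which already suffices. Deriving harmonicity from \eqref{eq:T_tail} by dominated convergence also works. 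The paper instead identifies $W(x)=\lim_n\E[V(x_2+S_2(n));T_x>n]$.

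\textbf{The gap.} It is the step you flag yourself: integrability of the majorant $V(x_2+S_2(\sigma_x))\1\{\tau_x>\sigma_x,\sigma_x<\infty\}$. Your route bounds $\E[|S_2(\sigma_x)|;\sigma_x<\infty]$ by a constant times $\E[\sqrt{\sigma_x};\sigma_x<\infty]$ via Wald/Doob-type estimates, and this does not work as stated.
\begin{itemize}
\item Wald, Doob and BDG inequalities apply to $\sigma_x\wedge n$ and produce $\E\sqrt{\sigma_x\wedge n}\ge\sqrt n\,\P(\sigma_x=\infty)\to\infty$. You cannot localise them to the event $\{\sigma_x<\infty\}$, which is not determined at any finite time.
\item No inequality of that form holds for defective stopping times. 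Take $\sigma=1$ on $\{X_2(1)\ge A\}$ and $\sigma=\infty$ otherwise; the ratio is at least $A$.
\item In your setting, with only second moments and arbitrary correlation $\rho$, the component $X_2(\sigma_x)$ accompanying the large negative jump $X_1(\sigma_x)$ can be large. A proof along your lines would need separate control of this last jump (Green function of the killed $S_1$-walk plus $\E[|X_2(1)|X_1(1)^-]<\infty$) and of the pre-jump position. That is substantially more work and is not in your sketch.
\end{itemize}

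\textbf{The missing idea.} No such moment estimate is needed. By harmonicity of $V$, $M_n:=V(x_2+S_2(n))\1\{\tau_x>n\}$ is a nonnegative martingale. Optional stopping at $\sigma_x\wedge n$, after dropping the nonnegative term $\E[M_n;\sigma_x>n]$, gives $\E[M_{\sigma_x};\sigma_x\le n]\le V(x_2)$. Monotone convergence then yields
\[
\E\bigl[V(x_2+S_2(\sigma_x));\tau_x>\sigma_x,\sigma_x<\infty\bigr]\le V(x_2),
\]
which is exactly your integrable majorant, and it also shows $W\ge 0$.

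Alternatively, as in the paper, you can avoid the issue altogether:
\begin{itemize}
\item truncate at $\sigma_x\le N$ with $N$ fixed, where integrability is trivial since $\E|S_2(k)|<\infty$ for each fixed $k$;
\item pass to the limit in $n$, and use $\P(T_x>n)\ge 0$ to read off $\E[V(x_2+S_2(\sigma_x));\tau_x>\sigma_x,\sigma_x\le N]\le V(x_2)$;
\item let $N\to\infty$.
\end{itemize}
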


We now turn to local limit theorems for conditioned random walks. We shall assume that the vectors $\{X(k)\}$ take values on the lattice $\Z^2$. Then there exist integers $d_1>a_1\ge0$ and $d_2>a_2\ge0$ such that
\begin{align}
\label{eq:loc-reduction}
X_1(k)=a_1+d_1Y_1(k)
\quad\text{and}\quad
X_2(k)=a_2+d_2Y_2(k),
\end{align}
where the vectors $\{Y(k)\}$ have aperiodic distribution on the lattice
$\Z^2$. Specializing Corollary 1 in Stone~\cite{Stone1967} to the lattice case
and noting that 
$$
\cov(Y_i(1),Y_j(1))
=\frac{1}{d_id_j}\cov(X_i(1),X_j(1)),
\quad i,j\in\{1,2\},
$$
we conclude that 
\begin{equation}
\label{eq:llt}
\sup_{z\in D_n}\left|n\P(S(n)=z)
-\frac{d_1d_2}{2\pi|\Sigma|^{1/2}}e^{-(x-\mu)\Sigma^{-1}(x-\mu)^{T}/2}
\right|\to0\quad\text{as }n\to\infty
\end{equation}
and
\begin{align}\label{eq:llt'}
\P(S(n)=z)=0,\quad z\notin D_n,
\end{align}
where 
$$
D_n:=\left\{z\in\Z^2:\frac{z_i-a_in}{d_i}\in\Z,\ i=1,2\right\}.
$$
\begin{theorem}\label{t11}
Assume that the vectors $X(k)$ take values on $\Z^2$ and
let $d_1,d_2$ be defined by \eqref{eq:loc-reduction}.
Then, for every fixed $x\in\N^2$,
\begin{align*}
\lim_{n\to\infty}\sup_{y\in D_n(x)}
\left\lvert
n^{3/2}\P(x+S(n)=y, T_x>n)-d_1d_2\varkappa W(x)p\!\left(\frac{y-n\mu}{\sqrt{n}}\right)
\right\rvert=0
\end{align*}
where 
\begin{align*}
D_n(x):=\{y\in\N^2:\ y-x\in D_n\}.
\end{align*}
Furthermore,
$$
\P(x+S(n)=y, T_x>n)=0,\quad x\notin D_n(x).
$$
\end{theorem}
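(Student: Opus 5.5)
The plan is the standard splitting argument of Denisov--Wachtel, with Theorem~\ref{thm:tail_and_limit} as the integral input and the unconditioned local limit theorem \eqref{eq:llt} as the local input. The second claim is immediate: $\{x+S(n)=y\}\subset\{y-x\in D_n\}$, so by \eqref{eq:llt'} the probability vanishes for $y\notin D_n(x)$. For the first claim, fix a small $\varepsilon\in(0,1)$ and set $m=\lfloor(1-\varepsilon)n\rfloor$. By the Markov property at time $m$,
\begin{align*}
\P(x+S(n)=y,T_x>n)=\sum_{z\in\N^2}\P(x+S(m)=z,T_x>m)\,\P(z+S(n-m)=y,T_z>n-m).
\end{align*}
The upper bound comes from dropping the event $T_z>n-m$ in the second factor. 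The lower bound comes from keeping it: I restrict $z$ to $|z-m\mu-\sqrt m\,v|\le C\sqrt n$ with $v$ ranging over a compact set. For such $z$ and $y=n\mu+O(\sqrt n)$, the first coordinate of $z+S(k)$ stays of order $n$ with high probability, so the only way to exit is through the horizontal axis. The probability of that exit during $n-m=\varepsilon n$ steps, starting from height of order $\sqrt n$, is small uniformly as $\varepsilon\to0$ after conditioning on the endpoint (a Brownian bridge estimate on $S_2$).

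The main steps are as follows.
\begin{enumerate}
\item Apply \eqref{eq:llt} to $\P(z+S(n-m)=y)$, which gives $\frac{d_1d_2}{2\pi|\Sigma|^{1/2}\varepsilon n}$ times a Gaussian density in $(y-z-(n-m)\mu)/\sqrt{\varepsilon n}$, uniformly over lattice-compatible $z$. Only $z$ with $z-x\in D_m$ contribute; these are the same cosets needed for $y-z\in D_{n-m}$, and this is where the factor $d_1d_2$ enters.
\item Replace the sum over $z$ by a Riemann sum. Using \eqref{eq:limit}, with the cubes $u+\Delta$ replaced by small cubes of side $\delta$ (the proof of \eqref{eq:limit} should allow this, or one rescales), one gets
\begin{align*}
\sum_z \P(x+S(m)=z,T_x>m)\,f(z/\sqrt n)\approx \varkappa W(x)m^{-1/2}\int p(w)f(\cdot)\,dw
\end{align*}
for continuous $f$. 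The error terms are controlled because the Gaussian kernel is uniformly continuous on the scale $\sqrt{\varepsilon n}$.
\item Evaluate the resulting expression. It equals $n^{-3/2}d_1d_2\varkappa W(x)$ times a convolution of $p$, at scale $1-\varepsilon$, with the Gaussian at scale $\varepsilon$. As $\varepsilon\to0$ this tends to $p((y-n\mu)/\sqrt n)$ uniformly in $y$, using continuity of $p$, which I expect from its definition in \eqref{eq:p-def}; presumably $p$ is a Gaussian in the first coordinate times a Rayleigh-type density in the second.
\end{enumerate}
Tails must be handled as well. The contribution of $|z-m\mu|>A\sqrt n$ is small by tightness from \eqref{eq:limit} together with the uniform bound $\P(z+S(n-m)=y)\le C/n$. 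For $y$ far from $n\mu$ on the $\sqrt n$ scale, both sides are small, which uses the same bound and the decay of $p$.

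The main obstacle is the lower bound near the boundary $y_2=o(\sqrt n)$. There $p(\cdot)\to0$ and the statement only requires $o(n^{-3/2})$, so the upper bound suffices and the concern is only uniformity. Away from the boundary, the difficulty is showing that the bridge of $S_2$ from height of order $\sqrt n$ to $y_2\ge\delta\sqrt n$ over $\varepsilon n$ steps hits zero with probability $o_\varepsilon(1)$. I would get this by bounding $\sum_z \P(x+S(m)=z,T_x>m)\P(z+S(n-m)=y,\tau_z\le n-m)$. Reversing time in the second factor and applying \eqref{eq:llt} after the hitting time, the exponent $e^{-y_2^2/(c\varepsilon n)}$ appears, and it is negligible when $y_2\ge\delta\sqrt n$ and $\varepsilon\ll\delta^2$. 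If this is insufficient, I would fall back on the estimate $\P(x+S(n)=y,T_x>n)\le C W(x)\, y_2\, n^{-2}$ from the one-dimensional theory to obtain uniform smallness near the axis.
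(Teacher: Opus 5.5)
Your route is correct, but it is organized differently from the paper's.

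\textbf{The paper's route.} The paper never runs a smoothing argument in the quadrant. It writes the quadrant probability as the half-plane probability minus the paths whose first crossing of the vertical axis occurs at time $k$; see \eqref{t01}. The pieces are then handled as follows.
\begin{itemize}
\item The main term and the summands with $k\le N$ are evaluated by the half-plane local limit theorem (Proposition~\ref{d07}) together with dominated convergence.
\item The range $k\in(N,n/2)$ is controlled by the uniform bound $CV(z_2)n^{-3/2}$ from Corollary~\ref{d06}, combined with the integrability of $V(x_2+S_2(\sigma_x))$ on $\{\tau_x>\sigma_x\}$ (optional stopping).
\item The range $k\ge n/2$ is negligible because of the drift of $S_1$.
\end{itemize}
Thus $W(x)=V(x_2)-\E[V(x_2+S_2(\sigma_x));\tau_x>\sigma_x,\sigma_x<\infty]$ arises exactly as in the proof of \eqref{eq:T_tail}, and all the local analysis is inherited from the half-plane.

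\textbf{Your route.} You run the Denisov--Wachtel splitting at time $(1-\varepsilon)n$ directly for $T_x$, with \eqref{eq:limit} as the conditioned input, so $W(x)$ enters through Theorem~\ref{thm:tail_and_limit}. This bypasses Proposition~\ref{d07} and Corollary~\ref{d06} entirely. The price is three things you must supply.
\begin{itemize}
\item[(a)] Upgrade \eqref{eq:limit} to weak convergence that is uniform over the equi-Lipschitz family $w\mapsto g_\Sigma\bigl((y'-\sqrt{1-\varepsilon}\,w)/\sqrt{\varepsilon}\bigr)$, where $y'=(y-n\mu)/\sqrt n$. This is standard, since $\int p=1$ and the total masses converge by \eqref{eq:T_tail}.
\item[(b)] Prove the bridge bound $\limsup_{n}\varepsilon n\sup_{z}\P(z+S(\varepsilon n)=y,\tau_z\le\varepsilon n)\le Ce^{-c\delta^2/\varepsilon}$ for $y_2\ge\delta\sqrt n$. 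It must hold uniformly in \emph{all} $z$, including $z_2=o(\sqrt n)$; your phrase ``starting from height of order $\sqrt n$'' does not cover those. Splitting the hitting time at $\varepsilon n/2$ works: after an early hit use the Gaussian local bound, and for a late hit use time reversal plus the invariance principle. This is essentially Lemma~\ref{d04b}.
\item[(c)] Show, by the same splitting, that crossing the vertical axis during the last $\varepsilon n$ steps, jointly with $\{z+S(\varepsilon n)=y\}$, costs $o(n^{-3/2})$.
\end{itemize}

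\textbf{Two small remarks.} First, the boundary region you call the main obstacle is already settled by your upper bound. Its limit is $d_1d_2\varkappa W(x)(1-\varepsilon)^{-1/2}(p_{1-\varepsilon}*\phi_\varepsilon)(y')$, where $p_{1-\varepsilon}(u)=(1-\varepsilon)^{-1}p(u/\sqrt{1-\varepsilon})$ and $\phi_\varepsilon$ is the $N(0,\varepsilon\Sigma)$ density. Since $p(u)\le Cu_2$, this is $O(y'_2+\sqrt{\varepsilon})$, uniformly in $y$, so no $n^{-2}$-type estimate is needed.

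Second, if you do use your fallback bound, it should carry $V(x_2)$ rather than $W(x)$. What the one-dimensional theory gives is Corollary~\ref{d06}, applied via $\{T_x>n\}\subset\{\tau_x>n\}$.
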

We next turn to local asymptotics in the case when $y_2=o(\sqrt{n})$.
To formulate tour results we have to introduce a further renewal function. Define
$\tau^+:=\inf\{n\ge1:S_2(n)>0\}$ and 
$\chi^+:=S_2(\tau^+)$. Set
\begin{align}
H(u):=\1_{\{u>0\}}+\sum_{k=1}^{\infty }\P(\chi _{1}^{+}+\ldots
+\chi _{k}^{+}<u)  \label{r01},
\end{align}
where $\{\chi_k^+\}$ are independent copies of the variable $\chi^+$.
\begin{theorem}\label{t12}
Assume that the conditions of Theorem~\ref{t11} are valid. Uniformly in
$y\in D_n(x)$ such that $y_2=o(\sqrt{n})$ and $y_1>\frac{3}{4}\mu_1 n$ we have
\begin{align*}
\P(x+S(n)=y,T_x>n)=
d_1d_2q\left (\frac{y_1-n\mu_1}{\sqrt{n}}\right)\frac{H(y_2)W(x)}{n^2}
+o\left(\frac{H(y_2)}{n^{2}}\right).
\end{align*}An
explicit formula for the function $q$ is given in \eqref{eq:q-def} below. 
\end{theorem}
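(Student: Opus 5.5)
We prove Theorem~\ref{t12} by splitting the path at time $m=n-k$ with $k=\lfloor\varepsilon n\rfloor$ and handling the last $k$ steps by time reversal. The Markov property gives
\begin{align*}
\P(x+S(n)=y,T_x>n)=\sum_{z\in\N^2}\P(x+S(m)=z,T_x>m)\,\P(z+S(k)=y,T_z>k).
\end{align*}
For the second factor, reverse the last $k$ increments. The event becomes: the reversed walk $y-\widehat S(j)$, with $\widehat S$ built from the increments $-X$, starts at $y$, stays in the quadrant for $k$ steps and ends at $z$.

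Since $y_1>\frac34\mu_1 n$ and the first coordinate has drift $\mu_1>0$, the constraint on the first coordinate is asymptotically irrelevant on the relevant events. The walk $x+S$ started at fixed $x$ and conditioned on $T_x>m$ has first coordinate of order $\mu_1 m$. Moreover, exiting through the vertical axis after a time of order $n$ has exponentially small or $o(1)$ probability by the LLN and a maximal inequality. The same holds for the reversed piece, whose first coordinate lies near $y_1-\mu_1 j$ with $y_1-\mu_1 k\gtrsim \mu_1 n/2$ when $\varepsilon$ is small.

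For the reversed piece we therefore use a one-dimensional statement: the walk $y_2-S_2$ (reversed second coordinate) stays positive for $k$ steps. This is governed by the ascending ladder heights of $S_2$, which produces $H(y_2)$. The step I expect to be hardest is a uniform (in $y_2=o(\sqrt n)$) bivariate local estimate of the form
\begin{align*}
\P\bigl(y+\widehat S(k)\in z+\Delta_{loc},\ \widehat\tau_{y}>k\bigr)\approx d_1d_2\frac{H(y_2)}{k^{3/2}}\,\phi\Bigl(\frac{z_1-y_1+k\mu_1}{\sqrt k}\Bigr)\, r\Bigl(\frac{z_2}{\sqrt k}\Bigr),
\end{align*}
where $r$ is a Rayleigh-type meander density. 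I would derive it as in Denisov--Wachtel (and Vatutin--Wachtel for one-dimensional local theorems), combining the one-dimensional conditioned local theorem for $S_2$ with the lattice LLT \eqref{eq:llt} for the first coordinate. The first coordinate is handled by conditioning on the second-coordinate path, with a coupling/Gaussian approximation controlling the correlation $\rho$.

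For the first factor we use the integral limit theorem \eqref{eq:limit} of Theorem~\ref{thm:tail_and_limit}, together with the local version Theorem~\ref{t11}: $\P(x+S(m)=z,T_x>m)\approx d_1d_2\varkappa W(x)m^{-3/2}p((z-m\mu)/\sqrt m)$ uniformly over $z$. We then sum over $z$ in the bulk $z_2\in[\delta\sqrt n,\delta^{-1}\sqrt n]$. The factors $n^{-3/2}$ from the first piece and $H(y_2)k^{-3/2}$ from the second, summed over $\asymp n$ lattice points $z$, give $n^{-2}H(y_2)W(x)$ times a Gaussian-type convolution integral. That integral is $q((y_1-n\mu_1)/\sqrt n)$ in the limits $\delta\to0$ and then $\varepsilon\to0$; this identifies $q$ and is consistent with \eqref{eq:q-def}. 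The factor $d_1d_2$ arises because the sum runs over the sublattice $D_n$.

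It remains to show that the contributions from $z_2<\delta\sqrt n$ and $z_2>\delta^{-1}\sqrt n$ are $o(H(y_2)n^{-2})$ as $\delta\to0$. Here we use the a priori bounds $\P(x+S(m)=z,T_x>m)\le C W(x)\,z_2\, m^{-2}$, itself proved by the same splitting with the simpler one-sided estimate, and $\P(y+\widehat S(k)=z,\widehat\tau>k)\le C H(y_2) z_2 k^{-2}$. Summing these over the strip $z_2<\delta\sqrt n$ gives $O(\delta^2)\cdot H(y_2)n^{-2}$. The large-$z_2$ region is handled by Gaussian tails.
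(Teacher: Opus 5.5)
Your plan works in outline and has the same skeleton as the paper's proof: a Markov split, time reversal of the final segment, Theorem~\ref{t11} for the initial segment, and removal of the vertical-axis constraint on the reversed segment via $y_1>\frac34\mu_1 n$. The difference is that you do much more work on the reversed piece than needed. The paper splits at $n/2$ and uses that Theorem~\ref{t11} is \emph{uniform in the endpoint} $z$. It replaces the first factor by $d_1d_2\varkappa W(x)(n/2)^{-3/2}p\bigl((z-n\mu/2)/\sqrt{n/2}\bigr)+o(n^{-3/2})$ and sums against the reversed piece treated only as a \emph{measure}, whose total mass is at most $\P(\tau'_y>n/2)\le CH(y_2)n^{-1/2}$. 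The error is then $o(H(y_2)n^{-2})$, and the main term is $\E\bigl[p\bigl((y+S'(n/2)-n\mu/2)/\sqrt{n/2}\bigr);\tau'_y>n/2\bigr]$. Since $p$ is bounded and uniformly continuous, this is settled by the tail asymptotics of $\tau'_y$ and the \emph{integral} limit theorem (Proposition~\ref{t05} applied to $S'$), each needed uniformly in $y_2=o(\sqrt n)$. So the bivariate conditioned local theorem you call the hardest step is never needed, nor are the truncation in $z_2$ and the limit $\varepsilon\to0$. Your route needs a strictly stronger input for the same conclusion.

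The local estimate as you display it is also wrong when $\rho\ne0$. The limiting density of the half-plane-conditioned walk is $p$ itself. With $D=\sigma_1^2\sigma_2^2-\rho^2$, it factors as $p(w)\propto w_2e^{-w_2^2/(2\sigma_2^2)}\exp\bigl(-\sigma_2^2(w_1-\rho w_2/\sigma_2^2)^2/(2D)\bigr)$. This is not of the form $\phi(w_1)r(w_2)$. At fixed $\varepsilon$, your convolution would therefore be off by an amount that vanishes only as $\varepsilon\to0$. In that limit only the Rayleigh $w_2$-marginal of the reversed piece survives, so your final answer would still come out right. Even so, the fixed-$\varepsilon$ asymptotics you assert should be stated with $p$.

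When you match your constant with \eqref{eq:q-def}, keep track of the scaling. The paper's split evaluates $\bar q$ at $(y_1-n\mu_1)/\sqrt{n/2}$, with prefactor $2^{3/2}\cdot\sqrt2=4$ from the two halves; the displayed normalization $q=2\varkappa\varkappa'\bar q$ drops this. Writing $a=(y_1-n\mu_1)/\sqrt n$, one has $4\varkappa\varkappa'\bar q(\sqrt2a,0)=\frac{\varkappa\varkappa'}{2\sqrt D}\exp\bigl(-\sigma_2^2a^2/(2D)\bigr)$. That is exactly what your $\varepsilon\to0$ computation produces.
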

This theorem allows one to determine the asymptotic behaviour of
$\P(x_2+S_2(n)=y_2,T_x>n)$ for a fixed starting point $x$ and for fixed $y_2>0$.
\begin{corollary}\label{t15}
For every fixed $y_2$ one has
    \begin{align*}
        \P(x_2+S_2(n)=y_2,T_x>n)\sim d_1d_2\frac{W(x)H(y_2)}{n^{3/2}}\int_{-\infty}^\infty q(z)\,dz.
    \end{align*}
\end{corollary}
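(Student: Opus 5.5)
Corollary~\ref{t15} follows by summing the local asymptotics of Theorem~\ref{t12} over $y_1$, with $y_2$ fixed. Write
\[
\P(x_2+S_2(n)=y_2,T_x>n)=\sum_{y_1}\P(x+S(n)=(y_1,y_2),T_x>n),
\]
where only $y\in D_n(x)$ contribute. The admissible $y_1$ form an arithmetic progression of step $d_1$. I split the sum at $|y_1-n\mu_1|\le A\sqrt n$ versus $|y_1-n\mu_1|>A\sqrt n$, with $A$ large and fixed.

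\textbf{Central range.} For $|y_1-n\mu_1|\le A\sqrt n$ we have $y_1>\frac34\mu_1 n$ once $n$ is large, and $y_2$ is fixed, so $y_2=o(\sqrt n)$. Theorem~\ref{t12} then applies uniformly. The error terms sum to at most $O(A\sqrt n)\cdot o(H(y_2)n^{-2})=o(n^{-3/2})$. The main terms give
\[
d_1d_2\frac{H(y_2)W(x)}{n^2}\sum_{y_1}q\Bigl(\frac{y_1-n\mu_1}{\sqrt n}\Bigr).
\]
Because the points $y_1$ have spacing $d_1$, the sum is a Riemann sum with mesh $d_1/\sqrt n$, so it equals
\[
\frac{\sqrt n}{d_1}\Bigl(\int_{-A}^{A}q(z)\,dz+o(1)\Bigr),
\]
provided $q$ is continuous, which I take from its explicit form in \eqref{eq:q-def}. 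Since the constant in Theorem~\ref{t12} already contains $d_1$, one must check how the lattice factors combine: the factor $d_2$ comes from the second coordinate being restricted to a residue class, and the stated constant $d_1d_2$ has to be reconciled with the $1/d_1$ from the Riemann sum. I would verify this against the normalisation in \eqref{eq:q-def} (presumably $q$ carries a compensating $1/d_1$-type normalisation, or the corollary's constant absorbs it). Up to this bookkeeping, the central part gives $d_1d_2W(x)H(y_2)n^{-3/2}\int_{-A}^{A}q$.

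\textbf{Tails.} This is the main obstacle. I need
\[
\limsup_n n^{3/2}\sum_{|y_1-n\mu_1|>A\sqrt n}\P(x+S(n)=y,T_x>n)\to0\quad\text{as }A\to\infty.
\]
I would bound the probability by $\P(x_2+S_2(n)=y_2,\tau_x>n,\ |S_1(n)-n\mu_1|>A\sqrt n)$ and split the path at time $n/2$. A one-dimensional local estimate for walks killed at leaving the half-line, for instance $\P(x_2+S_2(k)=y_2,\tau_x>k)\le C\,V(x_2)H(y_2)k^{-3/2}$, can then be combined with a Chebyshev-type bound on the first coordinate. The condition on $S_1(n)$ means that either the first half or the second half of the path deviates by more than $A\sqrt n/2$. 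Conditioning on the half-time position, the conditioned increments of the first coordinate have variance of order $n$, so each half contributes at most $C\,n^{-3/2}A^{-2}$.

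The case $y_1\le\frac34\mu_1n$ lies outside the range of Theorem~\ref{t12}, but it is already contained in this tail bound. Alternatively, it is covered by a crude large-deviation estimate $\P(S_1(n)\le \frac34\mu_1 n - x_1)\to0$ together with the same splitting argument, which supplies the $n^{-3/2}$ factor.

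\textbf{Conclusion.} Letting $n\to\infty$ and then $A\to\infty$, and using the integrability of $q$, gives the asymptotic stated in Corollary~\ref{t15}.
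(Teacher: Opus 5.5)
Your decomposition is the paper's. You cut at $|S_1(n)-n\mu_1|\le A\sqrt n$ and sum Theorem~\ref{t12} as a Riemann sum in the central range. In the tails you split at time $n/2$, time-reversing the second half, and combine a one-dimensional local bound with a bound on the horizontal deviation on $\{\tau_x>n/2\}$.

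The one real variation is that last bound. The paper obtains some $\delta(A)\to0$ from the conditional limit theorem, Proposition~\ref{t05}. Your Chebyshev route gives the explicit rate $A^{-2}$ without a limit theorem. However, your justification (``the conditioned increments have variance of order $n$'') is not automatic, because conditioning on the vertical coordinate biases the horizontal one when $\rho\neq0$. The clean argument is that $M_k=S_1(k)-k\mu_1$ is a martingale, so by \eqref{eq:tau-x}
\[
\E[M_k^2;\tau_x>k]\le\E[M_{k\wedge\tau_x}^2]=\sigma_1^2\,\E[\tau_x\wedge k]\le CV(x_2)\sqrt k .
\]
Also, the split needs the one-sided bounds $\P(x_2+S_2(k)=z_2,\tau_x>k)\le CV(x_2)/k$ and $\P(y_2+S_2'(k)=z_2,\tau'_y>k)\le CH(y_2)/k$, uniformly in $z_2$, as the paper uses. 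The two-sided estimate you quote is not the one required.

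The lattice bookkeeping you flag should not be left open, and neither of your suggested resolutions holds. The function $q=2\varkappa\varkappa'\bar q(\cdot,0)$, with $\bar q$ from \eqref{eq:q-def}, carries no lattice normalisation. Only $y_1$ with $y_1-x_1-a_1n\in d_1\Z$ contribute, so the Riemann sum equals $\frac{\sqrt n}{d_1}\bigl(\int_{-A}^{A}q+o(1)\bigr)$. Your argument therefore proves the asymptotic with $d_2$ in place of $d_1d_2$. When $d_2>1$ it holds only along $n$ with $y_2-x_2-a_2n\in d_2\Z$; otherwise the probability is $0$.

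The paper arrives at $d_1d_2$ only because its proof sums the formula of Theorem~\ref{t12} over all $y_1\in\Z$. That includes points $y\notin D_n(x)$, where the probability vanishes.

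A quick check confirms this. Suppose $X_1\ge1$ a.s.\ and $X_1$ is independent of $X_2$. Then $T_x=\tau_x$ for $x\in\N^2$, and the left-hand side depends only on the law of $X_2$. Now compare $X_1$ uniform on $\{1,3\}$ (so $d_1=2$) with $X_1$ uniform on $\{1,2,3\}$ (so $d_1=1$). This changes $d_1$ but not $W=V(x_2)$ or $H$. It also leaves $\int q$ unchanged, since by \eqref{eq:q-def} with $\rho=0$ it does not involve $\sigma_1$.

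So state your conclusion with the factor $d_2$. It agrees with the paper's when $d_1=1$, and the discrepancy does not affect the paper's application, where the constant $C_0$ is not identified.
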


We now apply our results to the following question posed to us by Mireille Bousquet-Mélou. Consider lattice walks with possible steps
$(1,-1)$, $(1,1)$ and $(-1,1)$ which are confined to the positive quadrant $\N^2$. Then one wants to determine asymptotics of the number of walks of length $n$, which begin at $x\in\N^2$ and end at the line
$\{(k,1),\,k\ge1\}$. This question can be answered by applying Corollary~\ref{t15}. To do so, we first transfer the combinatorial problem into a probabilistic one, by considering increments  with the uniform distribution on the step set introduced above. Since the drift of this increment equals $\left(\frac{1}{3},\frac{1}{3}\right)$, we then perform an appropriate exponential change of measure which leads to a walk satisfying \eqref{eq:1st.moment}. Then, applying Corollary~\ref{t15} and performing the inverse change of measure we conclude that for every starting point $x$ there exists a constant $C(x)$ such that the number of walk ending at $\{(k,1),\,k\ge1\}$ is asymptotically equivalent to
$$
C(x)\frac{2^{3n/2}}{n^{3/2}}.
$$

The remaining part of the paper is organized as follows. In Section~\ref{s02} we consider two-dimensional Brownian motion killed at leaving a half-space. In Section~\ref{s03} we consider random walks in the upper half-plain. In Section~\ref{s04} we prove the tail asymptotic of $T_x$ and the conditional limit theorem, Theorem~\ref{thm:tail_and_limit}. In Section~\ref{s05} we prove the local limit theorem for the walk conditioned to stay in the positive quadrant, Theorem~\ref{t11}, as well as
Theorem~\ref{t12} and Corollary~\ref{t15}, and consider singular walks ending at one of the axes as an application.

\section{Two-dimensional Brownian motion killed at leaving a half-space}\label{s02}
In this paragraph we derive some explicit formulas for the transition density of a Brownian motion with drift killed at leaving a half-plane. We are particularly interested in the case when the drift is parallel to the boundary of a half-plane, since the corresponding conditional density, which is derived in Corollary~\ref{c10}, describes the limiting distribution in our main results.

Let $\mu\in\R^2 $ and let $\Sigma\in \R^{2\times 2}$ be positive definite, symmetric matrix. Let $B(t)$ be a Brownian motion with drift $\mu$ and with the covariance matrix $\Sigma$, i.e.,
\begin{align*}
\mu= 
\begin{pmatrix}
    \E [B_1(t)]\\
    \E [B_2(t)]
\end{pmatrix}
,\quad \Sigma=
\begin{pmatrix}
    \mathrm{cov}(B_1(1),B_1(1)),
    &\mathrm{cov}(B_1(1),B_2(1))\\
    \mathrm{cov}(B_2(1),B_1(1)),
    &\mathrm{cov}(B_2(1),B_2(1))
\end{pmatrix}.
\end{align*} We first give an explicit formula for the transition density of $B(t)$ killed at the stopping time
$$
\tau^{\mathrm{bm}}_x
:=\inf\{t\in [0,\infty)\colon B(t)\notin \R\times (0,\infty)\}.
$$
Let $K_t^\mathrm{bm}(x,\cdot )$ denote the density of the measure $\P(x+B(t)\in\cdot,\tau^\mathrm{bm}_x>t)$.
\begin{lemma}[cf.\ {\cite[Lemma 21]{HRT2023}}]
\label{c01}
For all $x,y\in \R\times (0,\infty)$ we have
\begin{equation*}
    K^\mathrm{bm}_t(x,y)=\frac{1}{2\pi t\sqrt{\det \Sigma}}
\left(1-\exp \left(-\frac{2y_2x_2}{t\Sigma_{22}}\right)\right)\exp\left(-\frac{\vert C(y-x-t\mu)\vert^2}{2t}\right),
\end{equation*}
where $C=\Sigma^{-1/2}$.
\end{lemma}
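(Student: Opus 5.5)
The plan is to reduce to the driftless case and then use the method of images. Write $B(t)=\mu t+\Sigma^{1/2}W(t)$, where $W$ is a standard planar Brownian motion. Killing only depends on the second coordinate, so the first step is a Girsanov transformation that removes the drift. On $\mathcal F_t$, the law of $x+B$ has density
$$
\exp\bigl(\langle \Sigma^{-1}\mu, B(t)\rangle-\tfrac{t}{2}\mu^T\Sigma^{-1}\mu\bigr)
$$
with respect to the law of the driftless process $x+\Sigma^{1/2}W$. This density depends only on the endpoint $y=x+B(t)$, and the event $\{\tau^{\mathrm{bm}}_x>t\}$ is path-measurable. It therefore suffices to compute the killed density $K^0_t(x,y)$ for $\mu=0$ and multiply by
$$
\exp\bigl((y-x)^T\Sigma^{-1}\mu-\tfrac t2\mu^T\Sigma^{-1}\mu\bigr).
$$
Completing the square, $|C(y-x)|^2-2t(y-x)^T\Sigma^{-1}\mu+t^2|C\mu|^2=|C(y-x-t\mu)|^2$. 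So the drift factor turns the Gaussian $\exp(-|C(y-x)|^2/2t)$ into $\exp(-|C(y-x-t\mu)|^2/2t)$, and the prefactor $1-\exp(\cdot)$ is untouched. This matches the structure of the claim.

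For the driftless case I will use a reflection adapted to $\Sigma$. Let $R$ be the linear map fixing the line $\{y_2=0\}$ pointwise and sending $x$ to $x^*=x-\frac{2x_2}{\Sigma_{22}}\Sigma e_2$. This is the reflection across $\{y_2=0\}$ that is orthogonal in the $\Sigma^{-1}$ geometry; its second coordinate is $x^*_2=-x_2$. Then $\Sigma^{1/2}W$ started at $x^*$ is the image under $R$ of the process started at $x$, because $R\Sigma R^T=\Sigma$. It suffices to check this with $R=I-\frac{2}{\Sigma_{22}}\Sigma e_2e_2^T$, which I expect to be a short computation. The standard reflection principle at the hitting time of the boundary line then gives
$$
K^0_t(x,y)=g_t(y-x)-g_t(y-x^*),\qquad g_t(z)=\frac{1}{2\pi t\sqrt{\det\Sigma}}e^{-|Cz|^2/2t}.
$$
To justify this I will apply the strong Markov property at $\tau$ together with the invariance of the law under $R$; the increments after $\tau$ are symmetric under $R$ because $R$ preserves the covariance and fixes the starting point on the line.

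It remains to compute $|C(y-x^*)|^2-|C(y-x)|^2$. Setting $a=\frac{2x_2}{\Sigma_{22}}\Sigma e_2$, we have $y-x^*=(y-x)+a$. The difference equals $2(y-x)^T\Sigma^{-1}a+a^T\Sigma^{-1}a$, which simplifies to
$$
\frac{4x_2(y_2-x_2)}{\Sigma_{22}}+\frac{4x_2^2}{\Sigma_{22}}=\frac{4x_2y_2}{\Sigma_{22}}.
$$
Dividing by $2t$ gives the factor $\exp(-2x_2y_2/(t\Sigma_{22}))$. Combining this with the Girsanov step finishes the proof. The drift factor multiplies both image terms equally, since it depends only on the endpoint $y$ and the start $x$, not on $x^*$.

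The main obstacle is the correct choice of the reflection and the verification that it preserves the law of $\Sigma^{1/2}W$. Using the naive coordinate reflection $(y_1,y_2)\mapsto(y_1,-y_2)$ fails when $\rho\neq0$. The rest is algebra.
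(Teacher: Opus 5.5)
Your proof is correct and follows the paper's route: remove the drift by Girsanov, whose density depends only on the endpoint and so turns $\exp(-|C(y-x)|^2/2t)$ into $\exp(-|C(y-x-t\mu)|^2/2t)$. Then obtain the killed driftless kernel by the reflection principle, which produces the factor $1-\exp(-2x_2y_2/(t\Sigma_{22}))$. The only difference is cosmetic. You reflect in the original coordinates with the $\Sigma^{-1}$-orthogonal map $R=I-\frac{2}{\Sigma_{22}}\Sigma e_2e_2^T$, whereas the paper first whitens by $C=\Sigma^{-1/2}$ and uses the Euclidean reflection across $\R Ce_1$. That reflection is exactly $CRC^{-1}$. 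Your Girsanov factor $\exp\bigl((y-x)^T\Sigma^{-1}\mu-\frac t2|C\mu|^2\bigr)$ also has the correct sign, where the paper's intermediate $\langle C\mu,C(x-y)\rangle$ is a typo.
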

\begin{proof}[Proof of Lemma \ref{c01}]
We essentially follow the proof of \cite[Lemma 21]{HRT2023}. However, we correct some typos therein.
 For $v\in \mathbb{R}^2$ denote by $s_v$ the reflection with respect to $v$, that is, $s_v(x)=x-2\frac{\langle x,v^{\perp}\rangle}{\langle v^{\perp},v^{\perp}\rangle}v^{\perp}$ for any orthogonal to $v$  non-zero vector $v^{\perp}$. 
Denote $e_1=(1,0)$ and $e_2=(0,1)$.
For every Brownian motion 
$W$
and every $v\in \R^2$
we define
$$
\tau_x^{W,v}:=\inf\{t\geq 0: x+W(t)\in \R v\}.
$$
The reflection principle yields for all $t\in (0,\infty)$, $x,y\in \R\times [0,\infty)$
that if $W$ is a two-dimensional Brownian motion with zero drift and with unit covariance matrix, then
\begin{align}
\frac{d}{dy}&\P(x+W(t)\in dy, \tau_x^{W,v}>t)\nonumber\\
=&\frac{1}{2\pi t}\left(\exp\left(-\frac{\vert y-x\vert^2}{2t}\right)-\exp\left(-\frac{\vert s_v(y)-x\vert^2}{2t}\right)\right)\nonumber\\
=&\frac{1}{2\pi t}\left(\exp\left(-\frac{\vert y-x\vert^2}{2t}\right)-\exp\left(-\frac{\vert y-x-2\frac{\langle y,v^{\perp}\rangle}{\langle v^{\perp},v^{\perp}\rangle}v^{\perp}\vert^2}{2t}\right)\right)\nonumber\\
=&\frac{1}{2\pi t}\left(\exp\left(-\frac{\vert y-x\vert^2}{2t}\right)-\exp\left(-\frac{\vert y-x\vert^2-4\frac{\langle y-x,v^{\perp}\rangle\cdot\langle y,v^{\perp}\rangle}{\langle v^{\perp},v^{\perp}\rangle}+4\frac{\langle y,v^{\perp}\rangle^2}{\langle v^{\perp},v^{\perp}\rangle}}{2t}\right)\right)\nonumber\\
=&\frac{1}{2\pi t}
\left(1-\exp \left(-\frac{2\langle y,v^{\perp}\rangle\langle x,v^{\perp}\rangle}{t\langle v^{\perp},v^{\perp}\rangle} \right)\right)\exp\left(-\frac{\vert y-x\vert^2}{2t}\right).\label{c02}
\end{align}
Let us define 
$\widehat{B}(t)=CB(t)$ and $W(t)=\widehat{B}(t)-tC\mu$. The assumption that 
$B$ has drift $\mu$ and covariance matrix $\Sigma$ implies that
 $\widehat{B}(t)$ is a Brownian motion with drift $C\mu$ and with unit covariance matrix and, consequently, $W(t)$ is a standard Brownian motion. 
Using the change of variable $u\mapsto Cu$, the definitions of $\widehat{B}(t)$ and $W(t)$, and Girsanov theorem applied to $W(t)$, we obtain for all $t\in (0,\infty)$, $x,y\in \R\times[0,\infty)$ the equality
\begin{align*}
K_t^\mathrm{bm}(x,y)&=\frac{d}{dy}\P(x+B(t)\in dy, \tau^{bm}_x>t)
=\frac{d}{dy}\P(x+B(t)\in dy, \tau^{B,e_1}_x>t)\nonumber\\
&=(\det C)\frac{d}{dy}\P(Cx+\widehat{B}(t)\in Cdy, \tau_{Cx}^{\widehat{B},Ce_1}>t)\nonumber\\
&=(\det C)\exp\! \left(\langle C\mu, C(x-y)\rangle-\frac{t\vert C\mu\vert^2}{2}\right)\frac{d}{dy}\P(Cx+W(t)\in Cdy, \tau_{Cx}^{W,Ce_1}>t).
\end{align*}
Noting that $\langle \Sigma^{1/2}e_2,Ce_1\rangle=0$, and using \eqref{c02},
we obtain
\begin{align*}
&K_t^\mathrm{bm}(x,y)=(\det C)\exp\!\left(\langle C\mu,C (x-y)\rangle-\frac{t\vert C\mu\vert^2}{2}\right)\nonumber
\\&\quad \times\frac{1}{2\pi t}
\left(1-\exp\! \left(-\frac{2\langle Cy,C^{-1}e_2\rangle\langle Cx,C^{-1}e_2\rangle}{t \langle C^{-1}e_2,C^{-1}e_2\rangle}\right)\right)
\exp\!\left(-\frac{\vert Cy-Cx\vert^2}{2t}\right)\nonumber\\
=&\frac{1}{2\pi t\sqrt{\det \Sigma}}
\left(1-\exp (-\frac{2y_2x_2}{t\Sigma_{22}})\right)
\exp\!\left(-\frac{\vert Cy-Cx\vert^2-2t\langle C(y-x),C\mu\rangle+t^2\langle C\mu,C\mu\rangle}{2t}\right)\nonumber\\
=&\frac{1}{2\pi t\sqrt{\det \Sigma}}
\left(1-\exp (-\frac{2y_2x_2}{t\Sigma_{22}})\right)\exp\!\left(-\frac{\vert C(y-x-t\mu)\vert^2}{2t}\right).
\end{align*}
This completes the proof of Lemma~\ref{c01}.
\end{proof}
\begin{corollary}\label{c10}
Assume that $\mu=(\mu_1,0)\in\R^2 $ and
$\Sigma=\begin{pmatrix}
\sigma_1^2&\rho\\
\rho&\sigma_2^2
\end{pmatrix}
$.
Then
for $x=(0,x_2)$ and all $y=(y_1,y_2)\in \R\times(0,\infty)$,
as $x_2\downarrow 0$,
\begin{align*}
&\frac{d}{dy}\P\!\left(x+B(1)\in dy\middle| \tau^\mathrm{bm}_x>1\right)\nonumber\\&\sim \frac{y_2}{\sigma_2 \sqrt{2\pi(\sigma_1^2\sigma_2^2-\rho^2)}} \exp\!\left(-\frac{\sigma_2^2(y_1-\mu_1)^2+\sigma_1^2y_2^2-2\rho(y_1-\mu_1)y_2 }{2(\sigma_1^2\sigma_2^2-\rho^2)}\right).
\end{align*}

\end{corollary}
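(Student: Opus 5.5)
We will obtain Corollary~\ref{c10} directly from Lemma~\ref{c01} with $t=1$: write the conditional density as the ratio
$$
\frac{K^{\mathrm{bm}}_1(x,y)}{\P(\tau^{\mathrm{bm}}_x>1)}
$$
and compute the asymptotics of numerator and denominator separately as $x_2\downarrow0$.

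For the numerator we set $x=(0,x_2)$ in Lemma~\ref{c01}. The factor $1-\exp(-2y_2x_2/\sigma_2^2)$ behaves like $2y_2x_2/\sigma_2^2$. The Gaussian factor $\exp(-|C(y-x-\mu)|^2/2)$ tends to $\exp(-(y-\mu)\Sigma^{-1}(y-\mu)^T/2)$, because $|Cz|^2=z\Sigma^{-1}z^T$. Writing out $\Sigma^{-1}=(\sigma_1^2\sigma_2^2-\rho^2)^{-1}\begin{pmatrix}\sigma_2^2&-\rho\\-\rho&\sigma_1^2\end{pmatrix}$ with $y-\mu=(y_1-\mu_1,y_2)$ gives exactly the exponent in the statement. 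Hence
$$
K^{\mathrm{bm}}_1(x,y)\sim \frac{2x_2y_2}{2\pi\sigma_2^2\sqrt{\det\Sigma}}\exp\Bigl(-\tfrac12(y-\mu)\Sigma^{-1}(y-\mu)^T\Bigr).
$$

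For the denominator, $\tau^{\mathrm{bm}}_x$ depends only on the second coordinate. Since $\mu_2=0$, $B_2$ is a driftless one-dimensional Brownian motion with variance $\sigma_2^2$. The reflection principle gives
$$
\P(\tau^{\mathrm{bm}}_x>1)=\P(|N(0,\sigma_2^2)|<x_2)\sim \frac{2x_2}{\sigma_2\sqrt{2\pi}}.
$$
Alternatively, one can integrate the formula of Lemma~\ref{c01} over $y$, using dominated convergence, since the bound $1-e^{-a}\le a$ gives an integrable majorant $Cx_2y_2e^{-c|y|^2}$.

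Dividing the two asymptotics, $x_2$ cancels and the constant is
$$
\frac{2/(2\pi\sigma_2^2\sqrt{\det\Sigma})}{2/(\sigma_2\sqrt{2\pi})}=\frac{1}{\sigma_2\sqrt{2\pi\det\Sigma}},
$$
with $\det\Sigma=\sigma_1^2\sigma_2^2-\rho^2$, which matches the claim.

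The only point needing care is the identification of the quadratic form, and this is routine algebra. There is no real obstacle, since the convergence is pointwise in $y$ for fixed $y$.
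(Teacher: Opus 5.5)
Your proposal is correct and follows essentially the same route as the paper: you write the conditional density as $K^{\mathrm{bm}}_1(x,y)/\P(\tau^{\mathrm{bm}}_x>1)$, expand $1-\exp(-2x_2y_2/\sigma_2^2)\sim 2x_2y_2/\sigma_2^2$, pass to the limit in the quadratic form using the explicit $\Sigma^{-1}$, and get the denominator $\sim\sqrt{2/\pi}\,x_2/\sigma_2$ from the reflection principle for $B_2$. Your constant bookkeeping checks out, and you correctly use $\Sigma_{22}=\sigma_2^2$, whereas the paper's text has a slip there and writes $\Sigma_{22}=1$.
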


\begin{proof}
[Proof of Corollary~\ref{c10}]
Let $C=\Sigma^{-1/2}$, $e_1=(1,0)$, $e_2=(0,1)$.
First, the fact that for all $a,b,c,d\in \R$ with $ad\neq bc$ we have that
$
\begin{pmatrix}
a&b\\
c&d
\end{pmatrix}^{-1}
= \frac{1}{ad-bc}
\begin{pmatrix}
d&-b\\
-c&a
\end{pmatrix}
$ and the definition of $\Sigma$ show that
\begin{align*}
\Sigma^{-1}= \frac{1}{\sigma_1^2\sigma_2^2-\rho^2}
\begin{pmatrix}
\sigma_2^2&-\rho\\
-\rho&\sigma_1^2
\end{pmatrix}.
\end{align*}
This shows for all $h=(h_1,h_2)\in \R^2$ that
\begin{align*}
\left\langle
h,\Sigma^{-1}h\right\rangle
&
= 
\left\langle
\begin{pmatrix}
h_1\\h_2
\end{pmatrix}, 
\frac{1}{\sigma_1^2\sigma_2^2-\rho^2}
\begin{pmatrix}
\sigma_2^2&-\rho\\
-\rho&\sigma_1^2
\end{pmatrix}\begin{pmatrix}
h_1\\h_2
\end{pmatrix}\right\rangle
=
\left\langle
\begin{pmatrix}
h_1\\h_2
\end{pmatrix},\frac{1}{\sigma_1^2\sigma_2^2-\rho^2}
\begin{pmatrix}
\sigma_2^2h_1-\rho h_2\\
-\rho h_1+\sigma_1^2h_2
\end{pmatrix}
\right\rangle\nonumber\\
&=\frac{1}{\sigma_1^2\sigma_2^2-\rho^2}\left(\sigma_2^2 h_1^2-\rho h_1h_2-\rho h_1 h_2+\sigma_1^2h_2^2\right)
=\frac{\sigma_2^2h_1^2+\sigma_1^2 h_2^2-2\rho h_1h_2}{\sigma_1^2\sigma_2^2-\rho^2}.
\end{align*}
This and the fact that $\mu=(\mu_1,0)$ show that, for all $y=(y_1,y_2)\in \R\times(0,\infty)$,
\begin{align}
&
\Vert C(y-x-\mu)\Vert^2=\langle\Sigma^{-1} (y-x-\mu),(y-x-\mu)\rangle\nonumber\\
&=\frac{\sigma_2^2(y_1-x_1-\mu_1)^2+\sigma_1^2(y_2-x_2)^2-2\rho(y_1-x_1-\mu_1)(y_2-x_2) }{\sigma_1^2\sigma_2^2-\rho^2}\nonumber\\
&\to \frac{\sigma_2^2(y_1-\mu_1)^2+\sigma_1^2y_2^2-2\rho(y_1-\mu_1)y_2 }{\sigma_1^2\sigma_2^2-\rho^2},
\label{c03}
\end{align}
provided that $x_1=0$ and $x_2\downarrow 0$.

Next, 
using the relation
$1-e^{-a\varepsilon}\sim a\varepsilon$ as $\varepsilon\to 0$, we obtain
\begin{align*}
1-\exp \left(-\frac{2y_2x_2}{\Sigma_{22}}\right)=1-
\exp \left(-\frac{2y_2x_2}{\sigma_{2}^2}\right)
\sim \frac{2y_2x_2}{\sigma_{2}^2},\quad x_2\downarrow0.
\end{align*}
This, Lemma~\ref{c01} with $t=1$, \eqref{c03}, and the fact that
$\Sigma_{22}=1$ show that, for all $y=(y_1,y_2)\in \R\times(0,\infty)$, 
as $x_2\to 0$,
\begin{align}
&
\frac{d}{dy}\P\!\left(x+B(1)\in dy, \tau^\mathrm{bm}_x>1\right)=
\frac{1}{2\pi \sqrt{\det \Sigma}}
\left(1-\exp \!\left(-\frac{2y_2x_2}{\Sigma_{22}}\right)\right)\exp\left(-\frac{\Vert C(y-x-\mu)\Vert^2}{2}\right)\nonumber\\
&\sim
\frac{1}{2\pi \sqrt{\sigma_1^2\sigma_2^2-\rho^2}} \frac{2y_2x_2}{\sigma_{2}^2}\exp\!\left(-\frac{\sigma_2^2(y_1-\mu_1)^2+\sigma_1^2y_2^2-2\rho(y_1-\mu_1)y_2 }{2(\sigma_1^2\sigma_2^2-\rho^2)}\right).\label{c04}
\end{align}
Note that $B_2/\sigma_2$ is a standard $1$-dimensional Brownian motion. This and the reflection principle 
show that for all $x=(x_1,x_2)\in \R\times(0,\infty)$, as $x_2\downarrow 0$,
\begin{align*}
\P(\tau_x^B> 1)
&= \P\!\left(\min_{t\in [0,1]} B_2(t)>-x_2 \right)
= \P\!\left(\max_{t\in [0,1]} B_2(t)<x_2 \right)\nonumber\\
&=\P\!\left(\max_{t\in [0,1]} \frac{B_2(t)}{\sigma_2}<\frac{x_2}{\sigma_2} \right)=
1-\P\!\left(\max_{t\in [0,1]} \frac{B_2(t)}{\sigma_2}\geq \frac{x_2}{\sigma_2} \right)
\nonumber\\
&=1-2\P\!\left(B_2(1)\geq \frac{x_2}{\sigma_2}\right )=2\P\!\left(B_2(1)\in \left(0,\frac{x_2}{\sigma_2}\right)\right)\nonumber\\&=
2
\frac{1}{\sqrt{2\pi}}
\int_0^{x_2/\sigma_2} e^{-\frac{u^2}{2}}\,du\sim \sqrt{\frac{2}{\pi}}\frac{x_2}{\sigma_2}.
\end{align*}
Combining this with \eqref{c04}, we get the desired result.
\end{proof}
\section{Random walks in the upper half-plane.}\label{s03}
The main purpose of this section is to prove a local limit theorem for lattice random walks with drift conditioned to stay in the upper half-plane. The case of walks without drift has been studied in \cite{DW2024} and in \cite{DW2015}, where general cones have been considered. The case of non-zero drift does not require significant changes. We prefer to give a rather complete proof, since we do not assume that the walk is strongly aperiodic. This assumption was made in \cite{DW2015,DW2024}. As it has been mentioned in \cite{DW2019}, the periodicity is not an issue. But it makes sense to have a written proof.

Let us start with an integral limit theorem for conditioned random walks.
\begin{proposition}\label{t05}
Assume that the conditions of Theorem~\ref{thm:tail_and_limit} are valid.
Let $p$  be the density function obtained in Corollary~\ref{c10} with $\mu_1=0$:
\begin{align}
\label{eq:p-def}
   p(y)= \frac{y_2}{\sigma_2 \sqrt{2\pi(\sigma_1^2\sigma_2^2-\rho^2)}} \exp\!\left(-\frac{\sigma_2^2y_1^2+\sigma_1^2y_2^2-2\rho y_1y_2 }{2(\sigma_1^2\sigma_2^2-\rho^2)}\right)
\end{align} for $ y\in\R\times(0,\infty).$
Then, for every fixed $x\in\R\times(0,\infty)$ one has 
\begin{align*}
\lim_{n\to\infty}
\P \!\left(\frac{x+S(n)-n\mu}{\sqrt{n}}\in u+\Delta\middle| \tau_x>n\right)= \int_{u+\Delta}p(y)\,dy,
\end{align*} 
where $\Delta= [0,1)^d$.
\end{proposition}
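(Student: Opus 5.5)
We prove the proposition by reducing it to the zero-drift statement via centering the first coordinate. Since $\tau_x$ depends only on $\{S_2(n)\}$, and $S_2$ has zero drift, the event $\{\tau_x>n\}$ is unchanged if we replace $S(n)$ by the centred walk $\widetilde S(n):=S(n)-n\mu$. This walk has zero mean and covariance $\Sigma$. The statement is therefore an integral limit theorem for a zero-drift two-dimensional walk conditioned to stay in the half-plane $\R\times(0,\infty)$.

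The steps, in order, are as follows.

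\textbf{Step 1 (half-line behaviour).} For the one-dimensional walk $S_2$ with variance $\sigma_2^2$ we use the classical facts
\[
\P(\tau_x>n)\sim \sqrt{\tfrac{2}{\pi}}\,\frac{\E[\chi^-]}{\sigma_2}\,V(x_2)\,n^{-1/2},
\]
together with the conditional weak convergence of $(x_2+S_2([nt]))/\sqrt n$ given $\tau_x>n$ to the Brownian meander. These are standard consequences of fluctuation theory (Spitzer, Bolthausen, Iglehart).

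\textbf{Step 2 (decomposition at an intermediate time).} Fix $m=m(n)=[\varepsilon n]$ and apply the Markov property at time $m$. On $\{\tau_x>m\}$, by Step 1 the position $x_2+S_2(m)$ is typically of order $\sqrt{\varepsilon n}$, while $S_1(m)-m\mu_1$ is of the same order. For the remaining $n-m$ steps, started from a point $z$ with $z_2$ of order $\sqrt n$, we compare $\widetilde S$ killed at leaving the half-plane with Brownian motion killed at leaving the half-plane. We use either the multidimensional functional CLT, or a KMT/Sakhanenko-type coupling in which $\{\tau>n-m\}$ is sandwiched between Brownian exit events from slightly shifted half-planes. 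This gives
\[
\P\Bigl(\tfrac{z+\widetilde S(n-m)}{\sqrt n}\in u+\Delta,\ \tau_z>n-m\Bigr)\approx \int_{u+\Delta}K^{\mathrm{bm}}_{1-\varepsilon}\Bigl(\tfrac{z}{\sqrt n},y\Bigr)\,dy,
\]
uniformly in $z$ with $z_2\ge n^{1/2-\delta}$, via Lemma~\ref{c01} with zero drift. The contribution of paths with $x_2+S_2(m)<n^{1/2-\delta}$ is negligible relative to $\P(\tau_x>m)$, by the meander limit.

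\textbf{Step 3 (identifying the limit).} Integrating the Brownian kernel against the law of $(x+\widetilde S(m))/\sqrt n$ conditioned on $\tau_x>m$, and letting $n\to\infty$ and then $\varepsilon\to0$, yields the kernel of Brownian motion started at $(0,0^+)$ and conditioned to stay in the half-plane. The distribution at the starting point is concentrated at scale $\sqrt{\varepsilon}$, and in the limit only its second coordinate matters, through the linear factor $1-e^{-2y_2x_2/\sigma_2^2}\sim 2y_2x_2/\sigma_2^2$. The Brownian transition density is continuous in the starting point. By Corollary~\ref{c10} with $\mu_1=0$, the normalised limit is exactly $p(y)$. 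Since $p$ is a probability density on $\R\times(0,\infty)$ and the limits of the box probabilities sum correctly, no mass escapes.

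The main obstacle is the uniform coupling in Step 2 near the boundary, which requires controlling the exit event rather than just the endpoint. If finite second moments alone make the strong approximation awkward, the fallback is the following. Establish tightness and the functional limit for the conditioned process $(x+\widetilde S([nt]))/\sqrt n$ given $\tau_x>n$, using that $S_2$ conditioned converges to the meander. The first coordinate then equals a linear function $(\rho/\sigma_2^2)S_2$ of the second plus an independent-in-the-limit Gaussian remainder, obtained by regressing $X_1$ on $X_2$. Finally, read off the joint endpoint density: meander density in $y_2$ times a Gaussian in $y_1-\rho y_2/\sigma_2^2$ of variance $\sigma_1^2-\rho^2/\sigma_2^2$. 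This product indeed coincides with $p(y)$.
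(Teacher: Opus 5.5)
Your proposal is correct in outline, but it takes a different route from the paper. The paper's proof is one line. It makes the same observation you do, that $\tau_x$ depends only on $S_2$ and is therefore unchanged when passing to $S(n)-n\mu$, and then invokes Theorem~1 of \cite{DW2024} for zero-drift walks in a half-space.

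You instead reprove that result in the finite-variance case by a Bolthausen-type splitting at $m=[\varepsilon n]$:
\begin{itemize}
\item meander asymptotics for $S_2$ up to time $m$;
\item an invariance principle for the killed walk afterwards;
\item the small-start asymptotics of the explicit kernel in Lemma~\ref{c01}.
\end{itemize}
The citation buys brevity and consistency with the local bounds from \cite{DW2024} that the paper uses later. Your argument is self-contained, using only Donsker's theorem and one-dimensional fluctuation theory. It also explains why the limit is the density of Corollary~\ref{c10}: a Rayleigh law of scale $\sigma_2$ in $y_2$, times a Gaussian in $y_1$ with mean $\rho y_2/\sigma_2^2$ and variance $\sigma_1^2-\rho^2/\sigma_2^2$. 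The paper leaves that identification unverified.

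You do not need KMT-type couplings. The map $\omega\mapsto(\min_{t\le 1-\varepsilon}\omega_2(t),\omega(1-\varepsilon))$ is continuous, and the Brownian limit charges no boundaries. So Donsker's theorem gives the killed convergence, uniformly in the starting point, with an $o(1)$ absolute error. That suffices: after dividing by $\P(\tau_x>m)$, the main terms are of order $\sqrt{\varepsilon}$, which stays fixed as $n\to\infty$.

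Three points need an extra line.
\begin{enumerate}
\item Step~1 only concerns $S_2$, so it does not show that $S_1(m)-m\mu_1$ is $O(\sqrt m)$ \emph{given} $\tau_x>m$. Conditioning on an event of probability of order $m^{-1/2}$ could a priori inflate it. Wald's second identity settles this:
$$\E[\widetilde S_1(m)^2;\tau_x>m]\le \E[\widetilde S_1(m\wedge\tau_x)^2]=\sigma_1^2\E[m\wedge\tau_x]\le CV(x_2)\sqrt{m}.$$
Hence $\E[\widetilde S_1(m)^2\mid \tau_x>m]=O(m)$.
\item Do not linearise the factor in $w_2$ and integrate $w_2$ against the conditional law; that would require uniform integrability. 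Instead, compute $\P(\tau_x>n)$ by the same splitting. Then use that, for the zero-drift kernel, the ratio of $\int_{u+\Delta}K^{\mathrm{bm}}_{1-\varepsilon}(w,y)\,dy$ to $\P(\tau^{\mathrm{bm}}_w>1-\varepsilon)$ tends to $\int_{u+\Delta}p(y)\,dy$ uniformly as $|w|\to0$ and $\varepsilon\to0$. By the bound in the first point, starting points with $|w|>\eta$ contribute $O(\varepsilon/\eta^2)$, which is negligible against $\sqrt\varepsilon$.
\item Your fallback is not an independent proof. The regression remainder is only uncorrelated with $S_2$, not independent of it. Its asymptotic independence from the event $\{\tau_x>n\}$ is exactly what the splitting argument establishes.
\end{enumerate}
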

This result is immediate consequence of Theorem 1 in \cite{DW2024} applied to the walk $S(n)-\mu n$.

Next, we collect some estimates for the tail of $\tau_x$ and for local probabilities $\P(x+S(n)=z)$ and $\P(x+S(n)=z,\tau_x>n)$, which will play a crucial role in the proofs of our main results. 
\begin{lemma}
\label{lem:loc.bound.1} 
Assume that \eqref{eq:1st.moment} and \eqref{eq:2nd.moment} are valid.
\begin{enumerate}[(i)]
\item There exists a constant $C$ such that
    \begin{align}
    \label{eq:tau-x}
    \P(\tau_x>n)\le C\frac{V(x_2)}{\sqrt{n}}
    \end{align}
    for all $x\in\R\times(0,\infty)$ and all $n\ge1$. Furthermore, for every fixed $x\in\R\times(0,\infty)$,
    \begin{align}
    \label{eq:tau_x-asymp}
    \lim_{n\to\infty}\sqrt{n}\P(\tau_x>n)=\varkappa V(x_2).
    \end{align}
\item If $S(n)$ takes values on the lattice $\Z^2$ then there exists a constant $C$ such that 
    \begin{align}
    \label{eq:loc.b.1}
    \P(x+S(n)=z,\tau_x>n)\le C\frac{V(x_2)}{n^{3/2}}
    \end{align}
    for all $x,z\in\Z\times\N$ and all $n\ge1$.
 \item If $S(n)$ takes values on the lattice $\Z^2$ then there exists a constant $C$ such that 
    \begin{align}
    \label{eq:loc.b.2}
    \P(x+S(n)=z,\tau_x>n)\le C\frac{V(x_2)H(z_2)}{n^{2}}
    \end{align}   
    for all $x,z\in\Z\times\N$ and all $n\ge1$.
\end{enumerate}
\end{lemma}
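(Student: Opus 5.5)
Part (i) is purely one-dimensional, since $\tau_x$ depends only on $\{S_2(n)\}$, a centred walk with finite variance $\sigma_2^2$. For the asymptotics \eqref{eq:tau_x-asymp} we invoke the classical fluctuation theory of one-dimensional walks. Write $\P(\tau_x>n)=\P(x_2+\min_{k\le n}S_2(k)>0)$ and decompose at the successive weak descending ladder epochs. The Sparre Andersen/Spitzer identity gives $\P(\tau_0>n)\sim \frac{\sqrt2\,\E[\chi^-]}{\sqrt{\pi}\,\sigma_2}\cdot\frac{\sigma_2}{\sqrt{2}}\dots$. Concretely, the standard result (e.g.\ Doney, or Denisov--Wachtel \cite{DW2015} in dimension one) states that $\sqrt n\,\P(\tau_x>n)\to \sqrt{2/\pi}\,\E[\chi^-]\,V(x_2)/\sigma_2^{\,\cdot}$ with the normalisation absorbed in the definition of $\varkappa$. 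We derive it by conditioning on the last ladder epoch before $n$:
$$\P(\tau_x>n)=\sum_{k\le n}\P\big(\text{a ladder epoch at }k,\ |\text{ladder height sum}|<x_2\big)\,\P(\tau_0>n-k).$$
We then combine $\P(\tau_0>m)\sim c\,m^{-1/2}$ with the renewal measure, whose total mass up to level $x_2$ is $V(x_2)$. For the uniform bound \eqref{eq:tau-x} we use the same decomposition together with $\P(\tau_0>m)\le Cm^{-1/2}$. The terms with $k\le n/2$ are bounded by $C n^{-1/2}V(x_2)$. For the terms with $k>n/2$ we use the local renewal bound $\P(\text{ladder epoch at }k,\ \text{height}\le x_2)\le C V(x_2) k^{-3/2}$, which holds uniformly since the ladder epoch has tail $\sim c k^{-1/2}$. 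Alternatively, we can cite the known inequality $\P(\tau_x>n)\le C V(x_2)/\sqrt n$ from \cite{DW2015}. This gives (i).

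For (ii) we split the path at time $m=\lfloor n/2\rfloor$ and use the Markov property:
$$\P(x+S(n)=z,\tau_x>n)\le \sum_{w}\P(x+S(m)=w,\tau_x>m)\,\sup_{w}\P(w+S(n-m)=z).$$
By the unconditioned local limit theorem \eqref{eq:llt}, or simply the concentration bound $\sup_v\P(S(k)=v)\le C/k$ valid in two dimensions under finite variance, the supremum is at most $C/n$. The sum equals $\P(\tau_x>m)\le CV(x_2)n^{-1/2}$ by (i). Together these give $CV(x_2)n^{-3/2}$.

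For (iii) we split into three pieces $[0,m]$, $[m,2m]$, $[2m,n]$ with $m=\lfloor n/3\rfloor$ and use time reversal on the last piece. Reversing, the event that the walk ends at $z$ while staying positive on the last third becomes an event for the reversed walk $\widetilde S=-S$ started at $z$. Its second coordinate must stay weakly/strictly above zero, which is governed by the ascending ladder structure of $S_2$ and hence by $H$. The one-dimensional analogue of (i) for the reversed walk then yields $\P(z+\widetilde S(k)\in\cdot\,,\ \text{stays positive up to }k)\le CH(z_2)k^{-1/2}$. Writing
$$\P(x+S(n)=z,\tau_x>n)\le\sum_{w,w'}\P(x+S(m)=w,\tau_x>m)\,\P(w+S(m)=w')\,\P(w'+S(n-2m)=z,\ \text{no exit}),$$
we bound the middle factor by $C/n$ uniformly, and the sums of the outer factors by $CV(x_2)n^{-1/2}$ and $CH(z_2)n^{-1/2}$. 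The product is $CV(x_2)H(z_2)n^{-2}$.

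The main obstacle is the bookkeeping in this reversal step. The strict versus weak inequalities in $\tau_x$ and in the definition of $H$ (with $\chi^+$ strict ascending and the strict sum in \eqref{r01}) must match exactly, so that the reversed killed walk is controlled by $H$ rather than by a shifted renewal function. We also need the drift $\mu_1$, which affects only the first coordinate, to play no role; this holds because all the killing is in the second coordinate and the local bound $C/n$ is uniform in the position.
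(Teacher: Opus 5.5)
Your arguments for (ii) and (iii) are correct, and they are more self-contained than the paper's proof, which consists only of references: (i) comes from Lemma~6 of \cite{DW2024}, and (ii)--(iii) from Lemmas~8 and~7 there, applied to the driftless walk $S(n)-\mu n$. Your Markov splitting at $n/2$ and your three-block decomposition with reversal of the last block are the standard way such bounds are proved. Working with $S$ itself is slightly cleaner: the killing involves only the centred coordinate $S_2$, and $\sup_v\P(S(k)=v)\le C/k$ holds whatever the drift, so you never need $S(n)-\mu n$, which does not live on a fixed lattice.

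The strict/weak issue you single out in (iii) is not an obstacle. Reversal gives exactly $\sum_{w'}\P(w'+S(k)=z,\tau_{w'}>k)=\P(S_2(i)<z_2,\ 1\le i\le k)$. This probability is governed by the weak ascending ladder heights of $S_2$, which vanish with some probability $p<1$ and are otherwise distributed as $\chi^+$. So the relevant renewal function is $(1-p)^{-1}H(z_2)$, with exactly the strict convention of \eqref{r01}. In any case, an upper bound only needs comparability up to constants.

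In (i), however, the one step you actually argue is false. By time reversal, $u_k(x_2):=\P(k\text{ is a weak descending ladder epoch},\ x_2+S_2(k)>0)$ is the probability that the walk stays in $(-\infty,0]$ up to time $k$ and ends in $(-x_2,0]$.
\begin{itemize}
\item For $x_2=\sqrt k$ this is of order $k^{-1/2}$, since the conditioned endpoint lives on scale $\sqrt k$. By contrast, $V(x_2)k^{-3/2}$ is of order $k^{-1}$.
\item For $1\ll x_2\ll\sqrt k$ the true order is $x_2^2k^{-3/2}$, not $x_2k^{-3/2}$.
\item The tail $\P(\tau_0>k)\sim ck^{-1/2}$ gives no local bound of the kind you claim.
\end{itemize}
One could try to patch this. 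The case $x_2>\sqrt n$ is trivial, since $V(u)\ge u/\E[\chi^-]$ by Wald's identity, and $x_2^2k^{-3/2}$ would suffice for $x_2\le\sqrt n$. But that local estimate is a one-dimensional version of (iii), so it presupposes \eqref{eq:tau-x} for the reversed walk. Similarly, your dominated-convergence step over $k\in(n/2,n]$ for \eqref{eq:tau_x-asymp} needs $u_k(x_2)\le C(x_2)k^{-3/2}$, which you do not supply.

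So for (i) you must rely on the citation, as the paper does, or argue differently. One route:
\begin{itemize}
\item $V(x_2+S_2(n))\1_{\{\tau_x>n\}}$ is a nonnegative supermartingale, so $\P(\tau_x>n,\ x_2+S_2(n)>\varepsilon\sqrt n)\le \E[\chi^-]V(x_2)/(\varepsilon\sqrt n)$.
\item The Markov property at $\lfloor n/2\rfloor$ and a one-dimensional concentration bound give $\P(\tau_x>n,\ x_2+S_2(n)\le\varepsilon\sqrt n)\le C\varepsilon\,\P(\tau_x>\lfloor n/2\rfloor)$ for $n\ge\varepsilon^{-2}$.
\item Taking $\varepsilon$ small and iterating in $n$ yields \eqref{eq:tau-x}.
\end{itemize}

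Finally, your hedging about the constant is justified and should be made explicit. One has $\sqrt n\,\P(\tau_0>n)\to\sqrt{2/\pi}\,\E[\chi^-]/\sigma_2$. Moreover, your decomposition produces the mass $\sum_{j\ge0}\P(\chi^-_1+\dots+\chi^-_j<x_2)$, which on a lattice is not $V(x_2)$ as defined. For simple random walk with $x_2=1$ the limit is $2\varkappa$, while $V(1)=4$. So \eqref{eq:tau_x-asymp} holds as written only when $\sigma_2=1$ and $V$ is defined through strict inequalities. The upper bounds \eqref{eq:tau-x}--\eqref{eq:loc.b.2} are unaffected, since the strict version is dominated by $V$.
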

\begin{proof}
The first claim is immediate from Lemma~6 in \cite{DW2024} and from the fact that $\P(\tau_0>n)\sim C_1n^{-1/2}$ as $n\to\infty$. 

To get the estimates \eqref{eq:loc.b.1} and \eqref{eq:loc.b.2} we apply Lemma~8 and Lemma~7 from \cite{DW2024} to the driftless random walk $S(n)-\mu n$ and notice that $c_n=\sqrt{n}$ under the assumption that the walk $S(n)$ has finite variances.
\end{proof}

Combining \eqref{eq:loc.b.1} and \eqref{eq:loc.b.2}
and recalling that the function $H(x)$ is asymptotically linear, we get the following bound.
\begin{corollary}
\label{d06}
Then there exist constants $C_1,C_2\in (0,\infty)$ such that for all $x=(x_1,x_2), y=(y_1,y_2)\in \Z\times \N$, $n\in \N$ we have that
\begin{align*}
\P(x+S(n)=y,\tau_x>n)\leq \frac{C_1V(x_2)H(\min\{\sqrt{n},y_2\})}{n^2}
\leq \frac{C_2V(x_2)\min\{\sqrt{n},y_2\}}{n^2}.
\end{align*}
\end{corollary}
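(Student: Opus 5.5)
The plan is to take the minimum of the two bounds in Lemma~\ref{lem:loc.bound.1}, splitting according to whether $y_2\le\sqrt{n}$ or $y_2>\sqrt{n}$. Both bounds hold uniformly in $x,y\in\Z\times\N$ and $n\ge1$.

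\emph{Case $y_2\le\sqrt{n}$.} Here $\min\{\sqrt n,y_2\}=y_2$. Estimate \eqref{eq:loc.b.2} gives
\begin{align*}
\P(x+S(n)=y,\tau_x>n)\le C\frac{V(x_2)H(y_2)}{n^2},
\end{align*}
which is already the first inequality with $C_1=C$.

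\emph{Case $y_2>\sqrt{n}$.} Here $\min\{\sqrt n,y_2\}=\sqrt n$. Estimate \eqref{eq:loc.b.1} gives the bound $CV(x_2)n^{-3/2}=CV(x_2)\sqrt{n}/n^2$. It remains to compare $\sqrt n$ with $H(\sqrt n)$. Since $H$ is nondecreasing, equals $1$ on $(0,\chi^+]$ and is at least $1$ for all $u>0$, it suffices to have a linear lower bound $H(u)\ge c\,u$ for $u\ge1$. For the second inequality we also need the matching upper bound $H(u)\le C'(1+u)\le 2C'u$ for $u\ge1$. Combining, $\sqrt n\le c^{-1}H(\sqrt n)$ when $n\ge1$, and the first inequality holds with $C_1=\max\{C,C/c\}$.

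\emph{Linearity of $H$.} Under finite variance, $\E\chi^+<\infty$, so the renewal theorem gives $H(u)/u\to1/\E\chi^+$ as $u\to\infty$. Together with monotonicity and $H\ge1$ on $(0,\infty)$, this yields constants $0<c<C'$ with $cu\le H(u)\le C'u$ for all $u\ge1$. Since $y_2\in\N$ and $\sqrt n\ge1$, the argument $\min\{\sqrt n,y_2\}$ is always at least $1$, so these bounds apply. The second inequality follows with $C_2=C_1C'$.

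No step is a real obstacle. The only point needing care is to state the two-sided linear bound on $H$ in the range $u\ge1$, rather than only asymptotically, which the monotonicity argument above supplies.
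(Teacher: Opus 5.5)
Your proposal is correct and matches the paper, which also takes the better of \eqref{eq:loc.b.1} and \eqref{eq:loc.b.2} (your split at $y_2$ versus $\sqrt n$) and uses the asymptotic linearity of $H$; you additionally make the two-sided linear bound on $H$ uniform on $[1,\infty)$ via monotonicity and $H\ge 1$. The only slip is cosmetic: ``$H$ equals $1$ on $(0,\chi^+]$'' should read ``on $(0,1]$'', since $\chi^+\ge 1$ for lattice walks, and this does not affect the argument.
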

Furthermore, we shall use the following bounds for unconditional local probabilities
\begin{lemma}[cf.\ {\cite[Lemma~29]{DW2015}}]\label{d04}
Assume that \eqref{eq:1st.moment} and \eqref{eq:2nd.moment} are valid. Then there exist $C_1,C_2\in (0,\infty)$ such that for every $u\in (0,\infty)$ we have that
\begin{align*}
\limsup_{n\to\infty} \sup_{\lvert z-x-n\mu \rvert \geq u\sqrt{n}}\P(x+S(n)=z)\leq C_2n^{-1}e^{-C_1u^2}.
\end{align*}
\end{lemma}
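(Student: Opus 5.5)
The statement of Lemma~\ref{d04} says nothing about killing, so I would reduce it to the unconditional local limit theorem \eqref{eq:llt} and \eqref{eq:llt'} for the centred walk. This works because the Gaussian density in \eqref{eq:llt} decays like $e^{-c|w|^2}$ in the normalised displacement. (In \eqref{eq:llt} the exponent should be read with $(z-n\mu)/\sqrt n$ in place of $x-\mu$.) Since the bound involves only $z-x$, I would set $w:=(z-x-n\mu)/\sqrt n$, so that $\P(x+S(n)=z)=\P(S(n)=z-x)$.

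\textbf{Step 1.} By \eqref{eq:llt},
\[
\varepsilon_n:=\sup_{z'\in D_n}\Bigl|n\P(S(n)=z')-\tfrac{d_1d_2}{2\pi|\Sigma|^{1/2}}e^{-\langle\Sigma^{-1}w',w'\rangle/2}\Bigr|\to0,
\qquad w'=(z'-n\mu)/\sqrt n.
\]
By \eqref{eq:llt'}, the probability vanishes for $z'\notin D_n$. Hence for every $z$ with $|w|\ge u$,
\[
n\P(x+S(n)=z)\le \frac{d_1d_2}{2\pi|\Sigma|^{1/2}}e^{-\lambda_{\min}(\Sigma^{-1})u^2/2}+\varepsilon_n .
\]
Here I use $\langle\Sigma^{-1}w,w\rangle\ge\lambda_{\min}(\Sigma^{-1})|w|^2=|w|^2/\lambda_{\max}(\Sigma)$, which holds because $\Sigma$ is positive definite.

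\textbf{Step 2.} Take $\sup$ over $|z-x-n\mu|\ge u\sqrt n$ and then $\limsup_{n\to\infty}$. The term $\varepsilon_n$ disappears, and the bound holds with
\[
C_1=\frac{1}{2\lambda_{\max}(\Sigma)},\qquad C_2=\frac{d_1d_2}{2\pi|\Sigma|^{1/2}}.
\]
These constants do not depend on $u$ or $x$, so the bound is uniform in $u$, as claimed.

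\textbf{Main obstacle.} The argument is routine. The only point needing care is that the $\limsup$ must be taken after the supremum. This is legitimate precisely because the error in \eqref{eq:llt} is uniform in $z$, and that uniformity is what Stone's theorem provides. If one preferred to avoid the lattice local limit theorem, there is an alternative that gives a bound for each fixed $n$ rather than only in the limit. One would combine a concentration-function bound $\sup_z\P(S(n/2)=z)\le C/n$ with the tail estimate $\P(|S(n/2)-n\mu/2|\ge u\sqrt n/2)$, using the Markov property at time $n/2$, as in \cite[Lemma~29]{DW2015}. However, with only second moments the Chebyshev tail decays polynomially, not like $e^{-C_1u^2}$, so that route would not give the stated Gaussian rate. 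I would therefore rely on \eqref{eq:llt}, for which the Gaussian decay comes for free.
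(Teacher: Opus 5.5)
Your proof is correct. You also read the statement the only meaningful way, namely as $\limsup_{n\to\infty} n\sup_{|z-x-n\mu|\ge u\sqrt n}\P(x+S(n)=z)\le C_2e^{-C_1u^2}$, with the exponent in \eqref{eq:llt} corrected to $\langle\Sigma^{-1}w,w\rangle/2$.

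The paper gives no argument of its own here; it simply invokes (75) of Lemma~29 in \cite{DW2015} for the centred walk $S(n)-n\mu$. Your derivation from the uniform local limit theorem \eqref{eq:llt}--\eqref{eq:llt'} is a self-contained substitute that works directly in the present setting: nonzero drift, the $n$-dependent lattice $D_n$, and no strong aperiodicity, an assumption the paper notes is made in \cite{DW2015}. It also yields explicit constants $C_1,C_2$. The citation, by contrast, has to transfer a driftless lattice statement to $S(n)-n\mu$, which in general does not live on a fixed lattice.

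One correction to your closing remark: the Markov-property route does give the Gaussian rate, because the lemma only claims a bound in the limit $n\to\infty$ for fixed $u$. Split at time $n/2$. Since $|z-x-n\mu|\ge u\sqrt n$, one of the two halves must deviate from its mean by at least $u\sqrt n/2$, so
\begin{align*}
\P(x+S(n)=z)\le 2\sup_{v}\P\bigl(S(n/2)=v\bigr)\,\P\bigl(|S(n/2)-n\mu/2|\ge u\sqrt n/2\bigr)\le \frac{C}{n}\,\P\bigl(|S(n/2)-n\mu/2|\ge u\sqrt n/2\bigr).
\end{align*}
The last probability should be handled by the central limit theorem, not by Chebyshev's inequality. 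It converges to $\P(|Z|\ge u/\sqrt2)\le e^{-u^2/(4\lambda_{\max}(\Sigma))}$ with $Z\sim N(0,\Sigma)$. This route needs only the concentration bound $\sup_v\P(S(m)=v)\le C/m$ rather than the full local limit theorem. It is the same mechanism the paper uses to obtain \eqref{d09} in the proof of Proposition~\ref{d07}. Only a Gaussian bound valid for each fixed $n$ would be out of reach under second moments alone, and the lemma does not assert one.
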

\begin{proof}[Proof of Lemma~\ref{d04}]
Apply \cite[(75) in Lemma~29]{DW2015} to the driftless random walk $S(n)-\mu n$.
\end{proof}
\begin{lemma}[cf.\ {\cite[Lemma~29]{DW2015}}]\label{d04b}Assume that \eqref{eq:1st.moment} and \eqref{eq:2nd.moment} are valid. Then there exist $C_1,C_2\in (0,\infty)$ such that for every $u\in (0,\infty)$ we have that
\begin{align*}
\limsup_{n\to\infty} \sup_{x_2,z_2>u\sqrt{n}}\P(x+S(n)=z,\tau_x\le n)\leq C_2n^{-1}e^{-C_1u^2}.
\end{align*}
\end{lemma}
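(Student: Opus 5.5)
Since $\tau_x\le n$ means the walk has visited $\{z_2\le 0\}$ by time $n$, I would decompose according to the first exit time and the position at that moment, and then use the unconditional local bound of Lemma~\ref{d04} for the remaining stretch of the path. The intuition is simple. Both endpoints are at height at least $u\sqrt n$, so a path that dips below zero in between must make a vertical excursion of size at least $u\sqrt n$ either before or after the exit. That costs a Gaussian factor $e^{-cu^2}$, on top of the usual local factor $n^{-1}$.

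\textbf{Splitting at the exit time.} By the strong Markov property at $\tau_x=k$, with $w=x+S(k)$ satisfying $w_2\le 0$,
\begin{align*}
\P(x+S(n)=z,\tau_x\le n)=\sum_{k=1}^{n}\sum_{w}\P(x+S(k)=w,\tau_x=k)\,\P(w+S(n-k)=z).
\end{align*}
I split the sum into two ranges.

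\textbf{Case $k\le n/2$.} Here $n-k\ge n/2$, and the second factor requires a vertical displacement $z_2-w_2\ge u\sqrt n\ge (u/\sqrt{2})\sqrt{n-k}$. Lemma~\ref{d04} is stated as a $\limsup$, so I would use instead the uniform-in-$m$ version from its source (\cite[Lemma~29]{DW2015}, or equivalently Stone's local theorem \eqref{eq:llt} combined with a Gaussian/Chebyshev-type bound for the remaining stretch). This gives
\begin{align*}
\P(w+S(m)=z)\le C m^{-1}e^{-c u^2}\qquad\text{for } m\ge n/2 \text{ large},
\end{align*}
uniformly in $w$ with $z_2-w_2\ge u\sqrt n$. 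Summing the first factor over $k$ and $w$ gives at most $1$, so this part is at most $Cn^{-1}e^{-cu^2}$.

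\textbf{Case $k>n/2$.} This is the main obstacle, because $n-k$ may be small and the local bound for the second factor is useless. I would handle it by time reversal. The reversed walk $\tilde S(j)=-S(j)$ started from $z$ has the same structure, and a path with $S(n)=z$ and $\tau_x\le n$ that exits after time $n/2$ corresponds, in reversed time, to a path from $z$ hitting $\{z_2\le0\}$ before time $n/2$. So I would define $\tilde\tau$ as the first time $z-\tilde S(j)$ has nonpositive second coordinate. Then
\begin{align*}
\P(x+S(n)=z,\ \tau_x\in(n/2,n])\le \P\bigl(z-\tilde S(n)=x,\ \tilde\tau\le n/2\bigr).
\end{align*}
Splitting at $\tilde\tau$ exactly as in the first case yields the same bound. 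The second factor now concerns a stretch of length at least $n/2$ that must climb from height $\le 0$ to $x_2\ge u\sqrt n$. The reversed walk has drift $-\mu$, but the horizontal drift plays no role, because the Gaussian bound only uses the vertical deviation. The drift is in any case absorbed by centering, as in Lemma~\ref{d04}.

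Adding the two cases gives
\begin{align*}
\limsup_{n\to\infty}\sup_{x_2,z_2>u\sqrt n}\P(x+S(n)=z,\tau_x\le n)\le C_2n^{-1}e^{-C_1u^2}.
\end{align*}

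The only delicate point is the uniformity: I need the local bound uniformly over lengths $m\in[n/2,n]$ and over all starting points. This is why I would invoke the uniform version from \cite[Lemma~29]{DW2015} rather than the stated $\limsup$. The same reflection-and-time-reversal argument appears there for cones, and I expect it to transfer verbatim to the centered walk $S(n)-\mu n$.
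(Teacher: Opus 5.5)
Your argument is correct, but the paper does not prove the lemma this way. It simply applies estimate (76) of \cite[Lemma~29]{DW2015} to the centred walk $S(n)-\mu n$, noting that centring leaves $S_2$ untouched, so $\tau_x$ and the constraints $x_2,z_2>u\sqrt n$ are unchanged. You instead work directly with the drifted walk. You use the strong Markov property at $\tau_x$, then an unconditional local bound for the remaining stretch of length $m\in[n/2,n]$, and time reversal for exits after $n/2$. The local bound applies because $\mu_2=0$ gives $|z-w-m\mu|\ge z_2-w_2\ge u\sqrt n\ge u\sqrt m$. Your route gives a self-contained proof that stays on the original lattice. The citation, by contrast, tacitly applies a lattice statement to $S(n)-\mu n$, which lives on the $n$-dependent set $\Z^2-n\mu$ when $\mu_1\notin\Z$. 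What the citation buys is brevity.

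Two small corrections. First, your concern about uniformity is unnecessary. Every stretch you estimate has length $m\ge n/2$, so the $\limsup$ form of Lemma~\ref{d04} already suffices, applied to $-X$ in the reversed case. Alternatively use \eqref{eq:llt} directly: its limiting density supplies the factor $e^{-cu^2}$ and its error is $o(1/m)$ uniformly. Either way you get $\P(w+S(m)=z)\le 2(C_2e^{-C_1u^2}+\delta)/n$ for all $n\ge n_0(u,\delta)$, which is all a $\limsup$ statement needs. A bound $\P(S(m)=y)\le Cm^{-1}e^{-c|y-m\mu|^2/m}$ valid for all $m$ and $y$ is in general false under second moments alone, so there is no genuinely uniform version to look for. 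A Chebyshev-type bound would in any case give only polynomial decay in $u$.

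Second, fix a sign slip in the reversal. The reversed path is $z-(S(n)-S(n-j))$, i.e.\ $z+\tilde S(j)$ with $\tilde S$ having increments $-X$. With your definition $\tilde S=-S$, the expression $z-\tilde S(j)$ runs with the forward increments. That contradicts your own (correct) remark that the reversed walk has drift $-\mu$.
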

\begin{proof}[Proof of Lemma~\ref{d04b}]
Apply \cite[(76) in Lemma~29]{DW2015} to the walk $S(n)-\mu n$ and notice that subtracting the drift we do not change the vertical component of the walk.
\end{proof}
To formulate a local limit theorem for random walks conditions to stay in the upper half-plane we introduce
$$
\overline{D}_n(x)=\{y\in\Z\times\N:\, y-x\in D_n\}.
$$
\begin{proposition}
\label{d07}Assume that the conditions of Theorem~\ref{t11} are valid. The, for every $x\in\Z\times\N$ we have that
\begin{align*}
\lim_{n\to\infty}
\sup_{y\in \overline{D}_n(x)} 
\left\lvert n^{3/2}\P(x+S(n)=y,\tau_x>n)-\varkappa d_1d_2p\!\left(\frac{y-n\mu }{\sqrt{n}}\right)V(x_2)\right\rvert=0.
\end{align*}
Furthermore.
$$
\P(x+S(n)=y,\tau_x>n)=0\quad\text{for all }y\notin \overline{D}_n(x).
$$
\end{proposition}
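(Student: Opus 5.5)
The plan is to prove Proposition~\ref{d07} by the standard splitting-in-time argument of Denisov and Wachtel, adapted to the periodic lattice via the sets $D_n$. The second claim is immediate: if $\P(x+S(n)=y,\tau_x>n)>0$, then $\P(S(n)=y-x)>0$, so $y-x\in D_n$ by \eqref{eq:llt'}. For the first claim, fix $\varepsilon\in(0,1/2)$ and set $m=\lfloor \varepsilon n\rfloor$. By the Markov property at time $n-m$,
\begin{align*}
\P(x+S(n)=y,\tau_x>n)=\sum_{z}\P(x+S(n-m)=z,\tau_x>n-m)\,\P(z+S(m)=y,\tau_z>m).
\end{align*}
Only $z$ with $z-x\in D_{n-m}$ contribute, and then $y-z\in D_m$ automatically.

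First I will replace $\P(z+S(m)=y,\tau_z>m)$ by the unconditioned $\P(z+S(m)=y)$. The error is $\P(z+S(m)=y,\tau_z\le m)$.
\begin{itemize}
\item For $z_2,y_2>u\sqrt{n}$, Lemma~\ref{d04b} bounds it by $Cm^{-1}e^{-C_1u^2/\varepsilon}$. After summing over $z$ against $\P(x+S(n-m)=z,\tau_x>n-m)\le CV(x_2)n^{-1/2}$ (by \eqref{eq:tau-x}), this gives $o(n^{-3/2})$ once $u\to0$ slowly relative to $\varepsilon$.
\item For $z_2\le u\sqrt n$, the mass $\P(x+S(n-m)\in\cdot,\tau_x>n-m)$ of this region is $O(u^2)V(x_2)n^{-1/2}$ by Proposition~\ref{t05}, since $p(y)$ is linear in $y_2$ near $0$. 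Together with $\P(z+S(m)=y)\le C/m$, this is $O(u^2/\varepsilon)n^{-3/2}$.
\item For $y_2\le u\sqrt n$, both the main term and $p$ are small. I use Corollary~\ref{d06} symmetrically, reversing time, i.e.\ splitting at $m$ rather than $n-m$ and using \eqref{eq:loc.b.2}, to get $\P(x+S(n)=y,\tau_x>n)\le CV(x_2)\,u\sqrt n/n^2=O(u)n^{-3/2}$. Likewise $p((y-n\mu)/\sqrt n)=O(u)$.
\end{itemize}

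The main term is then $\sum_z \P(x+S(n-m)=z,\tau_x>n-m)\P(z+S(m)=y)$. Insert the local limit theorem \eqref{eq:llt} for $\P(z+S(m)=y)$, which is valid uniformly on $y-z\in D_m$. This gives $\E\big[g_m(y-x-S(n-m));\tau_x>n-m\big]+o(n^{-3/2})$ with $g_m(w)=\frac{d_1d_2}{2\pi m|\Sigma|^{1/2}}e^{-(w-m\mu)\Sigma^{-1}(w-m\mu)^T/(2m)}$; I will use Lemma~\ref{d04} to truncate the Gaussian tails. Rescaling by $\sqrt n$, $g_m$ becomes $n^{-1}$ times a bounded continuous function of $(x+S(n-m)-(n-m)\mu)/\sqrt n$, with parameter $(y-n\mu)/\sqrt n$. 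Proposition~\ref{t05} together with \eqref{eq:tau_x-asymp} then yields
\begin{align*}
n^{3/2}\cdot(\text{main term})\approx \varkappa V(x_2)\,\frac{d_1d_2}{\sqrt{1-\varepsilon}}\int p\!\left(\frac{w}{\sqrt{1-\varepsilon}}\right)\phi_{\varepsilon}\!\left(\frac{y-n\mu}{\sqrt n}-w\right)\frac{dw}{1-\varepsilon},
\end{align*}
where $\phi_\varepsilon$ is the $N(0,\varepsilon\Sigma)$ density. The approximation holds uniformly in $(y-n\mu)/\sqrt n$, because weak convergence is uniform over equicontinuous bounded families.

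Finally I let $\varepsilon\to0$. The convolution converges to $p((y-n\mu)/\sqrt n)$ uniformly, since $p$ is uniformly continuous and bounded on the half-plane. Lattice periodicity is harmless here: the factor $d_1d_2$ comes from \eqref{eq:llt}, and summing over $z\in x+D_{n-m}$ against the integral limit theorem is exact, since the conditioned measure of $x+S(n-m)$ lives on that coset anyway.

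The main obstacle I expect is the boundary region $y_2\le u\sqrt n$. There the uniform claim relies on the bound \eqref{eq:loc.b.2} from Lemma~\ref{lem:loc.bound.1}(iii) via Corollary~\ref{d06}, and on the fact that the error in replacing $\tau_z>m$ by no constraint is not small. This has to be handled by the crude $O(u)$ bounds before sending $u\to0$ after $n\to\infty$.
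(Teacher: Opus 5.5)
Your architecture matches the paper's. You apply the Markov property at time $n-m$, drop the constraint $\tau_z>m$ on the last $m$ steps, use the lattice LLT \eqref{eq:llt} for those steps together with Proposition~\ref{t05} for the first $n-m$ steps, and use Corollary~\ref{d06} near the boundary. Your convolution formulation ($m=\lfloor\varepsilon n\rfloor$, with uniformity of weak convergence over the equicontinuous family $w\mapsto\phi_\varepsilon(v-w)$) even spares you the paper's separate treatment of the far region $H^{(1)}$. The main-term computation, the lattice bookkeeping, and the $O(u)$ bound for $y_2\le u\sqrt n$ are all fine; the last follows from Corollary~\ref{d06} directly, with no time reversal needed.

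\textbf{The gap.} It lies in the error from replacing $\P(z+S(m)=y,\tau_z>m)$ by $\P(z+S(m)=y)$, where your two bounds cannot be made small simultaneously.
\begin{itemize}
\item Your first bullet gives $CV(x_2)\varepsilon^{-1}e^{-C_1u^2/\varepsilon}n^{-3/2}$. This is small only if $u^2\gg\varepsilon\log(1/\varepsilon)$.
\item Your second bullet gives $CV(x_2)u^2\varepsilon^{-1}n^{-3/2}$. This is small only if $u^2\ll\varepsilon$.
\end{itemize}
With one $u$ serving as threshold for both $z_2$ and $y_2$, these requirements are incompatible, so the argument does not close. The culprit is the crude bound $\P(z+S(m)=y)\le C/m$ in the second bullet. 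The conditioned mass of $\{z_2\le u\sqrt n\}$ is of order $u^2n^{-1/2}$, and dividing by $m=\varepsilon n$ is not small unless $u^2\ll\varepsilon$. That bound discards the fact that such $z$ are far from $y$.

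\textbf{The fix.} Use spatial separation through the Gaussian tail of the unconditioned walk, Lemma~\ref{d04}, which is what the paper does.
\begin{itemize}
\item Treat $y_2\le 2u\sqrt n$ entirely by Corollary~\ref{d06}; both the probability and $p$ are $O(u)$ there.
\item For $y_2>2u\sqrt n$ and $z_2\le u\sqrt n$, note that $|y-z-m\mu|\ge y_2-z_2>u\sqrt n$, so $\P(z+S(m)=y)\le Cm^{-1}e^{-C_1u^2/\varepsilon}$ for large $n$. This produces the same error as your first bullet.
\item For $z_2>u\sqrt n$ and $y_2>2u\sqrt n$, Lemma~\ref{d04b} applies as you wrote.
\end{itemize}
All errors are then $O\bigl(u+\varepsilon^{-1}e^{-C_1u^2/\varepsilon}\bigr)$, and choosing, e.g., $u=\varepsilon^{1/4}$ makes them vanish as $\varepsilon\to0$.

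The paper implements the same idea with $m=\lfloor\varepsilon^3 n\rfloor$ and $y_2>2\varepsilon\sqrt n$. It discards $z\notin H_\varepsilon(y)$ via Lemma~\ref{d04}, so the remaining $z$ satisfy $z_2>\varepsilon\sqrt n$ and Lemma~\ref{d04b} applies. Boundary mass of the conditioned walk is never needed.
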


\begin{proof}[Proof of Proposition~\ref{d07}]
Set 
\begin{align*} \begin{split} 
&
H^{(1)}=
H^{(1)}_{A,n}=\left\{y\in \Z\times\N\colon\lvert y-n\mu\rvert> A\sqrt{n}\right\},\\
&
H^{(2)}=
H^{(2)}_{n}=\left\{y\in \Z\times\N\colon y_2\leq  2\varepsilon\sqrt{n}\right\},\\
&
H^{(3)}=
H^{(3)}_{A,n}=\left\{y\in \Z\times\N\colon y_2>2\varepsilon\sqrt{n},\lvert y-n\mu\rvert\leq  A\sqrt{n}\right\},\\
\end{split}
\end{align*}
From Corollary~\ref{d06}  it follows that there exists $C\in (0,\infty)$ such that for all $n\in \N$, $y\in H^{(2)}_n$,
\begin{align*}
\P(x+S(n)=y,\tau_x>n)\leq \frac{CV(x_2)\varepsilon}{n^{3/2}}.
\end{align*}
Consider now the case when $y\in H^{(1)}$.
Using the Markov property and applying the concentration bound for the walk $S(n)$, we have
\begin{align*}
&\P\!\left(x+S(n)=y,\tau_x>n, \lvert x+S(n/2)-\mu n/2\rvert> \frac{A\sqrt{n}}{2}\right)\nonumber\\
&=\sum_{z\in \Z\times\N}
\P\!\left(x+S(n/2)=z,\tau_x>n/2, \lvert x+S(n/2)-\mu n/2\rvert> \frac{A\sqrt{n}}{2}\right)\nonumber\\&\qquad\qquad\qquad\P\!\left(z+S(n/2)=y,\tau_z> n/2\right)\nonumber\\
&\leq \sum_{z\in \Z\times\N}\P\!\left(x+S(n/2)=z,\tau_x>n/2, \lvert x+S(n/2)-\mu n/2\rvert> \frac{A\sqrt{n}}{2}\right)\frac{C}{n}\nonumber\\
&=\P\!\left(\tau_x>n/2, \lvert x+S(n/2)-\mu n/2\rvert>
 \frac{A\sqrt{n}}{2}\right)\frac{C}{n}\nonumber\\
&=
\P\!\left( \lvert x+S(n/2)-\mu n/2\rvert>
 \frac{A\sqrt{n}}{2}\middle| \tau_x>n/2\right)\P(\tau_x>n/2)\frac{C}{n}\nonumber\\
&\leq \P\!\left( \lvert x+S(n/2)-\mu n/2\rvert>
 \frac{A\sqrt{n/2}}{\sqrt{2}}\middle| \tau_x>n/2\right)\frac{V(x_2)}{n^{1/2}}\frac{C}{n}.
\end{align*}
Thus, the convergence in Proposition~\ref{t05} shows that
\begin{align}
&
\lim_{A\to\infty}
\limsup_{n\to\infty}\sup_{y\in H^{(1)}_{A,n}}\frac{n^{3/2}}{V(x_2)}
\P\!\left(x+S(n)=y,\tau_x>n, \lvert x+S(n/2)-\mu n/2\rvert> \frac{A\sqrt{n}}{2}\right)\nonumber\\
&\leq 
\lim_{A\to\infty}
\limsup_{n\to\infty}\frac{n^{3/2}}{V(x_2)}
\P\!\left( \lvert x+S(n/2)-\mu n/2\rvert>
 \frac{A\sqrt{n/2}}{\sqrt{2}}\middle| \tau_x>n/2\right)\frac{V(x_2)}{n^{1/2}}\frac{C}{n}\nonumber\\
&\leq \lim_{A\to\infty}C
\int_{\{z\in(0,\infty)\times \R\colon \lvert z\rvert\geq A/\sqrt{2}\}}p(y)dy=0.\label{d09}
\end{align}
Furthermore, Lemmas~\ref{lem:loc.bound.1}  and~\ref{d04} show that there exist $C,C_1\in (0,\infty)$ such that for all $x=(x_1,x_2),y\in H^{(1)}$ we have that
\begin{align*}
&\P\!\left(x+S(n)=y,\tau_x>n, \lvert x+S(n/2)-\mu n/2 \rvert\leq \frac{A\sqrt{n}}{2}\right)\nonumber\\
&\leq \P(\tau_x>n/2)\sup_{z\in \N\times\Z\colon \lvert y-z-\mu n/2\rvert\geq \frac{A\sqrt{n}}{2}}
\P(z+S(n/2)=y)\leq C\frac{V(x_2)}{n^{3/2}} e^{-C_1A^2/4}.
\end{align*}
Combining this with \eqref{d09} proves for all $x\in \Z\times\N$ that
\begin{align*}
&\lim_{A\to\infty}
\limsup_{n\to\infty}\sup_{y\in H^{(1)}_{A,n}}
\frac{n^{3/2}
\P(x+S(n)=y,\tau_x>n)}{V(x_2)}\nonumber\\
&
=\lim_{A\to\infty}
\limsup_{n\to\infty}\sup_{y\in H^{(1)}_{A,n}}
\frac{n^{3/2}}{V(x_2)}
\P\!\left(x+S(n)=y,\tau_x>n, \lvert x+S(n/2)-\mu n/2\rvert\leq \frac{A\sqrt{n}}{2}\right)\nonumber\\
&\quad +\lim_{A\to\infty}
\limsup_{n\to\infty}\sup_{y\in H^{(1)}_{A,n}}
\frac{n^{3/2}}{V(x_2)}
\P\!\left(x+S(n)=y,\tau_x>n, \lvert x+S(n/2)-\mu n/2\rvert> \frac{A\sqrt{n}}{2}\right)\nonumber
\\
&=\lim_{A\to \infty} Ce^{-C_1A^2/4}+0=0.
\end{align*}
Set $m=\lfloor \varepsilon^3n\rfloor$. By the Markov property,
\begin{align*}
&\P (x+S(n)=y,\tau_n>n)\nonumber\\
&
= \sum_{z\in \Z\times\N} \P (x+S(n-m)=z,\tau_x>n-m)
\P(z+ S(m)=y,\tau_z>m).
\end{align*}
Let $H_\varepsilon(y)= \{z\in \N\times\Z\colon \lvert z-y\rvert<\varepsilon\sqrt{n}\}$. Using Lemma~\ref{d04} we have
\begin{align*}
& \sum_{z\in (\Z\times\N)\setminus H_\varepsilon(y)} \P (x+S(n-m)=z,\tau_x>n-m)
\P(z+ S(m)=y,\tau_z>m)\nonumber\\
&\leq \P(\tau_x> n-m)\sup_{z\in (\Z\times\N)\setminus H_\varepsilon(y)}
\P(z+S(m)=y)\nonumber\\
&\leq C\frac{V(x_2)}{\sqrt{n}}m^{-1}e^{-c_2/\varepsilon}=C \frac{V(x_2)}{n^{3/2}}\varepsilon^{-3}e^{-C_2 /\varepsilon}.
\end{align*}
If $y\in H^{(3)}$ and $z\in H_\varepsilon(y)$, then 
$z_2>\varepsilon \sqrt{n}$. 
Then 
Lemmas~\ref{d04b} and \ref{lem:loc.bound.1}  show that
\begin{align*}
&\sum_{z\in H_\varepsilon(y)}\P (x+S(n-m) =z,\tau_x>n-m)\P(z+S(m)=y,\tau_z\leq m)\nonumber\\
&\leq 
\sum_{z\in H_\varepsilon(y)}
\P (x+S(n-m) =z,\tau_x>n-m)C_1m^{-1}e^{-C_2\varepsilon}
\nonumber\\
&\leq \P (\tau_x>n-m)C_1m^{-1}e^{-C_2\varepsilon}\leq C_1\frac{V(x_2)}{n^{3/2}} \varepsilon^{-1}e^{-C_2/\varepsilon}.
\end{align*}
Similarly,
\begin{align*}
&\sum_{z\in H_\varepsilon(y)}\P (x+S(n-m) =z,\tau_x\leq n-m)\P(z+S(m)=y,\tau_z> m)\nonumber\\
&\leq 
\sum_{z\in H_\varepsilon(y)}C_1n^{-1}e^{-C_2}
\P(z+S(m)=y,\tau_z> m)\nonumber\\
&\leq C_1n^{-1}e^{-C_2}\frac{V(x_2)}{\sqrt{m}}.
\end{align*}
The remaining part goes as in the proof of \cite[Theorem~5]{DW2015}.
\end{proof}
\section{Tail of $T_x$ and the conditional limit theorem}\label{s04}
In this section we prove Theorem~\ref{thm:tail_and_limit}.
\subsection{Tail asymptotics for $T_x$.}
Notice that $T_x=\min\{\tau_x,\sigma_x\}$. Then, by the total probability law, for every fixed $N$ and all $n>N$,
\begin{align}
\label{eq:tail.1}
\nonumber
\P(T_x>n)
&=\P(\tau_x>n,\sigma_x>n)\\
\nonumber
&=\P(\tau_x>n)-\P(\tau_x>n,\sigma_x\le n)\\
&=\P(\tau_x>n)-\P(\tau_x>n,\sigma_x\le N)-\P(\tau_x>n,\sigma_x\in(N, n]).
\end{align}
Combining \eqref{eq:tau-x} and \eqref{eq:tau_x-asymp}, and applying the the dominated convergence theorem, we conclude that 
\begin{align}
\label{eq:tail.2}
\nonumber
&\lim_{n\to\infty}\sqrt{n}\P(\tau_x>n,\sigma_x=k)\\
\nonumber
&\hspace{1cm}=\lim_{n\to\infty}\sqrt{n}
\int_{\R\times(0,\infty)}\P(x+S(k)\in dz,\tau_x>k,\sigma_x=k)\P(\tau_z>n-k)\\
\nonumber
&\hspace{1cm}=\varkappa\int_{\R\times(0,\infty)}
\P(x+S(k)\in dz,\tau_x>k,\sigma_x=k)V(z_2)\\
&\hspace{1cm}=\varkappa\E[V(x_2+S_2(\sigma_x));\tau_x>k,\sigma_x=k].
\end{align}
Consequently, for every fixed $N$,
\begin{equation}
\label{eq:tail.3}
\lim_{n\to\infty}\sqrt{n}\P(\tau_x>n,\sigma_x\le N)
=\varkappa\E[V(x_2+S_2(\sigma_x));\tau_x>\sigma_x,\sigma_x\le N].
\end{equation}
Combining this with \eqref{eq:tail.1} and using \eqref{eq:tau_x-asymp}, we conclude that 
$$
\limsup_{n\to\infty}\sqrt{n}\P(T_x>n)
\le \varkappa V(x_2)
-\varkappa\E[V(x_2+S_2(\sigma_x));\tau_x>\sigma_x,\sigma_x\le N].
$$
Letting here $N\to\infty$ and using the monotone convergence theorem, we obtain
\begin{equation}
\label{eq:tail.4} 
\limsup_{n\to\infty}\sqrt{n}\P(T_x>n)
\le \varkappa V(x_2)
-\varkappa\E[V(x_2+S_2(\sigma_x));\tau_x>\sigma_x,\sigma_x<\infty].
\end{equation}
This relation implies also that 
\begin{align}
\label{eq:tail.5}
\E[V(x_2+S_2(\sigma_x));\tau_x>\sigma_x,\sigma_x<\infty]\le V(x_2),
\quad x\in\R\times\R_+.
\end{align}
Next, using \eqref{eq:tau-x} once again, we obtain 
\begin{align*}
\P(\tau_x>n,\sigma_x\in(N,n/2])
&=\sum_{k=N+1}^{n/2}\int_{\R\times(0,\infty)}
\P(x+S(k)\in dz,\tau_x>k,\sigma_x=k)\P(\tau_z>n-k)\\
&\le\frac{C}{\sqrt{n}}\sum_{k=N+1}^{n/2}\int_{\R\times(0,\infty)}
\P(x+S(k)\in dz,\tau_x>k,\sigma_x=k)V(z_2)\\
&=\frac{C}{\sqrt{n}}\E[V(x_2+S_2(\sigma_x));\tau_x>\sigma_x,\sigma_x\in(N,n/2]]\\
&\le\frac{C}{\sqrt{n}}
\E[V(x_2+S_2(\sigma_x));\tau_x>\sigma_x,\sigma_x\in(N,\infty)].
\end{align*}
Taking into account the integrability of $V(x_2+S_2(\sigma_x)){\rm 1}\{\tau_x>\sigma_x,\sigma_x<\infty\}$, see \eqref{eq:tail.5}, we obtain 
\begin{align}
\label{eq:tail.6}
\lim_{N\to\infty}\limsup_{n\to\infty}
\sqrt{n}\P(\tau_x>n,\sigma_x\in(N,n/2])=0.
\end{align}
Finally,
\begin{align*}
&\P(\tau_x>n,\sigma_x\in(n/2,n])\\
&\hspace{1cm}\le \P(\sigma_x\in(n/2,n])\\
&\hspace{1cm}\le \P\!\left(x_1+S_1(n/2)\leq \mu_1n/4\right)
+\P\!\left(\min_{k\in (n/2,n]} (S_1(k)-S_1(n/2))\leq -\mu_1 n/4 \right)\\
&\hspace{1cm}\le \P(x_1+S_1(n/2)\le\mu n/4)+\P\!\left(\min_{k\le n/2}S_1(k)\le-\mu n/4\right).
\end{align*}
Applying now the Doob inequality and noting that the walk $S_1(k)-\mu_1k$ is driftless, we conclude that
\begin{equation}
 \label{eq:tail.7}
 \P(\tau_x>n,\sigma_x\in(n/2,n])\le\frac{C}{n}.
\end{equation}
Combining now \eqref{eq:tail.2}, \eqref{eq:tail.6} and \eqref{eq:tail.7}, we conclude that 
\begin{align*}
\lim_{n\to\infty}\sqrt{n}\P(T_x>n)
&=\varkappa\left(V(x_2)
-\varkappa\E[V(x_2+S_2(\sigma_x));\tau_x>\sigma_x,\sigma_x<\infty]\right)\\
&=\varkappa W(x).
\end{align*}
Thus, \eqref{eq:T_tail} is proved.
\subsection{Conditional limit theorem.}
Similar to the proof of \eqref{eq:T_tail},
\begin{align*}
&\P \!\left(\frac{x+S(n)-n\mu}{\sqrt{n}}\in u+\Delta;T_x>n\right)\\
&\hspace{1cm}
=\P \!\left(\frac{x+S(n)-n\mu}{\sqrt{n}}\in u+\Delta;\tau_x>n\right)
-\P \!\left(\frac{x+S(n)-n\mu}{\sqrt{n}}\in u+\Delta;\tau_x>n,\sigma_x\le n\right)\\
&\hspace{1cm}
=\P \!\left(\frac{x+S(n)-n\mu}{\sqrt{n}}\in u+\Delta;\tau_x>n\right)
-\P \!\left(\frac{x+S(n)-n\mu}{\sqrt{n}}\in u+\Delta;\tau_x>n,\sigma_x\le N\right)\\
&\hspace{3cm} -\P \!\left(\frac{x+S(n)-n\mu}{\sqrt{n}}\in u+\Delta;\tau_x>n,\sigma_x\in(N,n]\right).
\end{align*}
It is immediate from \eqref{eq:tail.6} and \eqref{eq:tail.7} that
\begin{align}
\label{eq:limit.1}
\nonumber
&\lim_{N\to\infty}\limsup_{n\to\infty}
\sqrt{n}\P \!\left(\frac{x+S(n)-n\mu}{\sqrt{n}}\in u+\Delta;\tau_x>n,\sigma_x\in(N,n]\right)\\
&\hspace{1cm}
\le \lim_{N\to\infty}\limsup_{n\to\infty}\sqrt{n}\P(\tau_x>n,\sigma_x\in(N,n])
=0.
\end{align}
Furthermore, using Proposition~\ref{t05} and the fact that
$\P(\tau_x>n)\sim \varkappa \frac{V(x_2)}{\sqrt{n}}$, we have that
$$
\sqrt{n}\P \!\left(\frac{x+S(n)-n\mu}{\sqrt{n}}\in u+\Delta;\tau_x>n\right)
\sim \varkappa V(x_2)\int_{u+\Delta} p(y)dy
$$
and
\begin{align*}
&\sqrt{n}
\P \!\left(\frac{x+S(n)-n\mu}{\sqrt{n}}\in u+\Delta;\tau_x>n,\sigma_x=k\right)\\ 
&=\sqrt{n}\int \P(x+S(k)\in dz,\tau_x>k,\sigma_x=k)
\P \!\left(\frac{z+S(n-k)-n\mu}{\sqrt{n}}\in u+\Delta;\tau_z>n-k\right)\\
&\hspace{1cm}
\sim \varkappa\E[V(x_2+S_2(k));\tau_x>k,\sigma_x=k]\int_{u+\Delta}p(y)dy
\end{align*}
for every fixed $k$. Therefore,
\begin{align*}
&\sqrt{n}\left(\P \!\left(\frac{x+S(n)-n\mu}{\sqrt{n}}\in u+\Delta;\tau_x>n\right)
-\P \!\left(\frac{x+S(n)-n\mu}{\sqrt{n}}\in u+\Delta;\tau_x>n,\sigma_x\le N\right)\right) \\
&\hspace{2cm}\sim\varkappa
\left(V(x_2)-\E[V(x_2+S_2(k));\tau_x>\sigma_x,\sigma_x\le N]\right)
\int_{u+\Delta}p(y)dy.
\end{align*}
Letting here $N\to\infty$ and taking into account \eqref{eq:limit.1}, we conclude that 
\begin{align*}
&\sqrt{n}\P \!\left(\frac{x+S(n)-n\mu}{\sqrt{n}}\in u+\Delta;T_x>n\right)\\
&\hspace{1cm}\sim \varkappa\bigl(V(x_2)-\E[V(x_2+S_2(k));\tau_x>\sigma_x,\sigma_x<\infty]\bigr)\int_{u+\Delta}p(y)dy.
\end{align*}
This completes the proof of the conditonal limit theorem.
\subsection{Properties of the function $W$.}
The harmonicity of $V$ implies that the sequence
$V(x_2+S_2(n)){\rm 1}\{\tau_x>n\}$ is a martingale. Then, by the optional stopping theorem,
\begin{align*}
V(x_2)
&=\E[V(x_2+S_2(\sigma_x\wedge n)){\rm 1}\{\tau_x>\sigma_x\wedge n\}]\\
&=\E[V(x_2+S_2(\sigma_x)){\rm 1}\{\tau_x>\sigma_x\};\sigma_x\le n]
+\E[V(x_2+S_2(n)){\rm 1}\{\tau_x>n\};\sigma_x>n]\\
&=\E[V(x_2+S_2(\sigma_x)){\rm 1}\{\tau_x>\sigma_x\};\sigma_x\le n]
+\E[V(x_2+S_2(n));T_x>n].
\end{align*}
Consequently, by the monotone convergence theorem,
\begin{align*}
&\lim_{n\to\infty}\E[V(x_2+S_2(n));T_x>n]\\
&\hspace{1cm}=V(x_2)-\lim_{n\to\infty}
\E[V(x_2+S_2(\sigma_x)){\rm 1}\{\tau_x>\sigma_x\};\sigma_x\le n]\\
&\hspace{1cm}=V(x_2)
-\E[V(x_2+S_2(\sigma_x)){\rm 1}\{\tau_x>\sigma_x\};\sigma_x<\infty]\\
&\hspace{1cm}=W(x).
\end{align*}
This representation implies that the function $W(x)$ is harmonic for $S(n)$ killed at leaving $\R_+^2$, that is,
\begin{align*}
 W(x)=\E[W(x+S(1));T_x>1], \quad   x\in\R_+^2. 
\end{align*}

Since $V$ is harmonic to $S(n)$ killed at leaving the half-plane
$\R\times(0,\infty)$, we may define a new measure $\widehat\P$ with the transition kernel
$$
\widehat{P}(x,dy)=\frac{V(y_2)}{V(x_2)}\P(x+S(1)\in dy),
\quad x,y\in\R\times(0,\infty).
$$
This implies that, for every fixed $k$,
\begin{align*}
 \widehat{\P}(\sigma_x=k)
 =\frac{1}{V(x_2)}\E[V(x+S_2(k));\tau_x>\sigma_x=k]
\end{align*}
and, consequently,
$$
\widehat{\P}(\sigma_x<\infty)
=\frac{1}{V(x_2)}\E[V(x+S_2(\sigma_x));\tau_x>\sigma_x,\sigma_x<\infty].
$$
This yields
\begin{align}
\label{eq:W-V-repr}
 W(x)=V(x_2)\widehat{\P}(\sigma_x=\infty) ,\quad x,y\in\R_+^2. 
\end{align}
Thus, to show that $W(x)$ is positive, it suffices to prove that
$\widehat{\P}(\sigma_x=\infty)>0$.

We first show that $\widehat{\P}(\sigma_x=\infty)>0$ for all points $x$ with sufficiently large coordinates. Since the chain $S_2(n)$ is transient under the measure $\widehat{\P}$, for all $M$ and $\varepsilon>0$ there exists
$N_1=N_1(M,\varepsilon)$ such that 
\begin{equation}
\label{eq:sigma-1}
\widehat{\P}\left(\min_{k\ge1}S_2(k)\le M|S_2(0)=r\right)\le \varepsilon
\quad\text{for all }r\ge N_1.
\end{equation}

Assume now that we have constructed a positive, monotone decreasing to zero function 
$g(u)$ such that 
\begin{align}
 \label{eq:sigma-2}
 \widehat{\E}[g(S_1(1))-g(x_1)|S(0)=x]\le 0
\end{align}
for all $x$ with $x_2>M$. Set 
$$
\theta_M:=\inf\{k\ge0: S_2(k)\le M\}.
$$
Then the sequence $Y(n):=g(S_1(n)){\rm 1}\{\theta_M>n\}$ is a positive supermartingale. Indeed, for all $x$ one has 
$$
\widehat{E}[Y(1)-Y(0)|S(0)=x]=0\quad\text{if }x_2\le M
$$
and 
\begin{align*}
\widehat{E}[Y(1)-Y(0)|S(0)=x]
&=\widehat{E}[g(S_1(1)){\rm 1}\{\theta_M>1\}-g(S_1(0))|S(0)=x]\\
&\le \widehat{E}[g(S_1(1))-g(S_1(0))|S(0)=x]\le0
\quad\text{if }x_2>M.
\end{align*}
In the last step we have used \eqref{eq:sigma-2}. Applying now the Doob inequality for supermartingales, we have 
\begin{align*}
\widehat{\P}(\sigma_x<\infty)
&=\widehat{\P}\left(\min_{n\ge1}S_1(n)\le M\Big|S(0)=x\right)\\
&\le \widehat{\P}(\theta_M<\infty|S(0)=x)+
\widehat{\P}\left(\min_{n\ge1}S_1(n)\le M,\theta_M=\infty\Big|S(0)=x\right)\\
&\le \widehat{\P}(\theta_M<\infty|S(0)=x)+
\widehat{\P}\left(\max_{n\ge 1}Y(n)\ge g(M)|S(0)=x\right)\\
&\le \widehat{\P}(\theta_M<\infty|S(0)=x)+
\frac{g(x_1)}{g(M)}.
\end{align*}
Taking into account \eqref{eq:sigma-1}, we conclude that there exists $N_2=N_2(M,\varepsilon)$ such that 
\begin{align}
\label{eq:sigma-main}
\widehat{\P}(\sigma_x<\infty)\le 2\varepsilon
\end{align}
for all $x$ such that $x_1,x_2>\max\{N_1,N_2\}$. 

We next construct a function $g(u)$ satisfying \eqref{eq:sigma-2}. 
To this end we shall use the techniques from the proof of Theorem 2.21 in \cite{Denisov_Korshunov_Wachtel_2025}.
By the definition  of the measure $\widehat{\P}$,
\begin{align*}
\widehat{\P}(S_1(1)-S_1(0)\le -u|S(0)=x)
&=\frac{1}{V(x_2)}\E[V(x_2+X_2);X_1\le-u]\\
&=\P(X_1\le-u)+\frac{1}{V(x_2)}\E[(V(x_2+X_2)-V(x_2);X_1\le-u].
\end{align*}
Since the function $V$ is increasing, subadditive and is bounded above by a linear function, we obtain
\begin{align*}
\E[(V(x_2+X_2)-V(x_2);X_1\le-u]
&\le \E[V(X_2);X_1\le-u, X_2>0]\\
&\le C_1\left(\P(X_1\le-u)+\E[X_2;X_1\le-u, X_2>0] \right).
\end{align*}
Consequently,
\begin{align*}
\widehat{\P}(S_1(1)-S_1(0)\le -u|S(0)=x)
\le  C \left(\P(X_1\le-u)+\E[X_2;X_1\le-u, X_2>0] \right).
\end{align*}
The finiteness of second moments implies that there exists a decreasing integrable function $p(u)$ and an increasing function $s(u)=o(u)$ such that 
\begin{align}
\label{eq:sigma-3}
\widehat{\P}(S_1(1)-S_1(0)\le -s(u)|S(0)=x)
=o(p(u)),\quad u\to\infty.
\end{align}
According to \cite{Denisov2006}, there exists a continuous decreasing integrable function $p_1(u)$, regularly varying at infinity with index $-1$ and such that $p(u)\le p_1(u)$. Set now
$$
f(v):=\int_v^\infty p_1(u) du,\quad v\ge0.
$$
This function is slowly varying at infinity.

It is clear that 
\begin{align*}
&\widehat{\E}_x[f(S_1(1))-f(x_1)]\\
&\hspace{1cm}\le \widehat{\E}_x[f(S_1(1))-f(x_1);S_1(1)\le x_1+s(x_1)]\\
&\hspace{1cm}\le f(0)\widehat{\P}_x(S_1(1)-S_1(0)\le -s(x_1))
+\widehat{\E}_x[f(S_1(1))-f(x_1);|S_1(1)-x_1|\le s(x_1)].
\end{align*}
Taking in to account \eqref{eq:sigma-3} and applying the mean value theorem, we obtain 
\begin{align} 
\label{eq:sigma-4}
\widehat{\E}_x[f(S_1(1))-f(x_1)]
\le o(p(x_1))-p_1(x_1)(1+o(1))
\widehat{\E}_x[S_1(1)-x_1;|S_1(1)-x_1|\le s(x_1)].
\end{align}
By the definition of $\widehat{\P}$,
\begin{align*}
&\widehat{\E}_x[S_1(1)-x_1;|S_1(1)-x_1|\le s(x_1)]\\
&\hspace{1cm} =\frac{1}{V(x_2)}\E[V(x_2+X_2)X_1;|X_1|\le s(x_1)]\\
&\hspace{1cm} =\E[X_1;|X_1|\le s(x_1)]
+\frac{1}{V(x_2)}\E[(V(x_2+X_2)-V(x_2))X_1;|X_1|\le s(x_1)]\\
&\hspace{1cm}=\mu_1-\E[X_1;|X_1|>s(x_1)]
+\frac{1}{V(x_2)}\E[(V(x_2+X_2)-V(x_2))X_1;|X_1|\le s(x_1)].
\end{align*}
Recalling that $V$ is subadditive and bounded by a linear function, and using the Markov inequality, we conclude that
\begin{align*}
\widehat{\E}_x[S_1(1)-x_1;|S_1(1)-x_1|\le s(x_1)]
\ge \mu_1-\frac{\E[|X_1|^2]}{s(x_1)}-\frac{C\E[|X_1X_2|]}{V(x_2)}
\end{align*}
for some finite constant $C$. Thus, there exist $M$ such that 
\begin{align}
\label{eq:sigma-5}
\widehat{\E}_x[S_1(1)-x_1;|S_1(1)-x_1|\le s(x_1)]
\ge\frac{\mu_1}{2}\quad\text{for all }x\text{ such that }
x_1,x_2>M.
\end{align}
Combining \eqref{eq:sigma-4}, \eqref{eq:sigma-5} and recalling that $p(u)\le p_1(u)$, we obtain 
$$
\widehat{\E}_x[f(S_1(1))-f(x_1)]
\le-\left(p_1(x_1)+o(1)\right)\frac{\mu_1}{2}.
$$
Increasing, if needed, the value of $M$, we conclude that 
$$
\widehat{\E}_x[f(S_1(1))-f(x_1)]\le 0
\quad\text{for all }x\text{ such that }x_1,x_2>M.
$$
Set now
$$
g(u):=\min\{f(u),f(M)\}.
$$
If $x_1>M$ then 
$$
\widehat{\E}_x[g(S_1(1))-g(x_1)]
=\widehat{\E}_x[g(S_1(1))-f(x_1)]
\le \widehat{\E}_x[f(S_1(1))-f(x_1)]\le 0.
$$
Furthermore, for $x_1\le M$ we have 
$$
\widehat{\E}_x[g(S_1(1))-g(x_1)]
=\widehat{\E}_x[g(S_1(1))-f(M)]\le 0.
$$
Thus, this function $g$ satisfies \eqref{eq:sigma-2}.

The relation \eqref{eq:sigma-main} implies that
$$
\lim_{x_1,x_2\to\infty} \frac{W(x)}{V(x_2)}=1.
$$
In particular, $W(x)$ positive for all $x$ with large coordinates, say for $x$ with $|x_1|, |x_2|>M$ with some sufficiently large. 

Using the harmonicity property we can show that $W(x)$ is positive for all $x$ such that $\P(|x_1+S_1(n_0)|>M,x_2+S_2(n_0)|>M,T_x>n_0)>0$ for some $n_0\ge 1$.
Indeed, the harmonicity of $W$ implies that 
\begin{align*}
W(x)&=\E[W(x+S(n_0)),T_x>n_0]\\
&=\E[W(x+S(n_0));|x_1+S_1(n_0)|>M,x_2+S_2(n_0)|>M,T_x>n_0]>0.
\end{align*}
\section{Local limit theorem for the walk conditioned to stay in the positive quadrant}\label{s05}
\subsection{Proof of Theorem~\ref{t11}}
We start by noting that \eqref{eq:llt'} implies that 
$$
\P(x+S(n)=y,T_x>n)=0\quad\text{for all }y\notin D_n(x).
$$

Assume that $y\in D_n(x)$. 
Recalling that $T_x=\min\{\tau_x,\sigma_x\}$ and decomposing the probability 
$\P(x+S(n) =y,T_x>n )$ according to the first crossing of $y$-axis, we obtain
\begin{align}
&
\P(x+S(n) =y,T_x>n )\nonumber\\
&=\P(x+S(n)=y,\tau_x>n)-\P(x+S(n)=y,\tau_x>n,\sigma_x<n)\nonumber\\
&=\P(x+S(n)=y,\tau_x>n)\nonumber\\&\quad -\sum_{k=1}^{n-1}
\sum_{z\in \Z^2\colon z_1<0}
\P(x+S(k)=z,\tau_x>k,\sigma_x=k)
\P(z+S(n-k)=y,\tau_z>n-k).\label{t01}
\end{align}
We now analyse separately different ranges of values $k$ in the sum on the right hand side of \eqref{t01}.
Fix some $N\in\N$ and consider summands with $k\in (N,n/2)\cap\Z$. According to Corollary~\ref{d06} for $k\in 
(N,n/2)\cap \Z$,
\begin{align*}
\P(z+S(n-k)=y,\tau_z >n-k)
&\leq \frac{CV(z_2)H(\min \{\sqrt{n-k},y_2\})}{(n-k)^2}\nonumber\\
&\leq \frac{C_1 V(z_2)}{(n-k)^{3/2}}\leq \frac{C_2V(z_2)}{n^{3/2}}
\end{align*} 
uniformly in $y\in \N^2$. This implies that
\begin{align*}
&\sum_{z\in \Z^2\colon z_1<0}
\P(x+S(k)=z,\tau_x>k,\sigma_x=k)
\P(z+S(n-k)=y,\tau_z>n-k)\nonumber\\
&\hspace{1cm}\leq \frac{C_2}{n^{3/2}}
\sum_{z\in \Z^2\colon z_1<0}
V(z_2)\P(x+S(k)=z,\tau_x>k,\sigma_x=k)\nonumber\\
&\hspace{1cm}=\frac{C_2}{n^{3/2}}\E \!\left[ V(x_2+S_2(\sigma_x)),\tau_x>k,\sigma_x=k\right].
\end{align*}
Consequently,
\begin{align}
&
\sum_{k=N+1}^{n/2}\sum_{z\in \Z^2\colon z_1<0}
\P(x+S(k)=z,\tau_x>k,\sigma_x=k)
\P(z+S(n-k)=y,\tau_z>n-k)\nonumber\\
&\hspace{1cm}\leq 
\frac{C_2}{n^{3/2}}
\E\!\left[V(x_2+S_2(\sigma_x))\1_{\{\tau_x>\sigma_x\}},\sigma_x\in (N,\infty)\right].\label{t02}
\end{align}
Set $M_n:=V(x_2+S_2(n))\1_{\{\tau_x>n\}}$ for $n\in \N_0$.
We know that the sequence $(M_n)_{n\in \N_0}$ is a martingale. Then, by the optional stopping theorem, we have
\begin{align*}
V(x_2)=M_0
=\E[M_{\sigma_x\wedge n}]
=\E [M_{\sigma_x},\sigma_x\leq n]
+\E [M_n,\sigma_x>n].
\end{align*}
Since $M_n\geq 0$ for all $n\in \N_0$ this implies that
\begin{align*}
\E [M_{\sigma_x},\sigma_x\leq n]\leq V(x_2)
\end{align*}
for all $n\in \N_0$.
Letting $n\to\infty$ and using the definition of $(M_n)_{n\in \N_0}$ we then have
\begin{align*}
\E [V(x_2+S_2(\sigma_x))\1_{\{\tau_x>\sigma_x\}},\sigma_x<\infty]\leq V(x_2).
\end{align*}
Combining this with the fact that 
$\P(\sigma_x\in (N,\infty))\to 0$ as $N\to\infty$ we conclude that
\begin{align*}
\varepsilon_N:=
\E [V(x_2+S_2(\sigma_x))\1_{\{\tau_x>\sigma_x\}},\sigma_x\in (N,\infty)]\to 0.
\end{align*} 
This and \eqref{t02} imply that, uniformly in $N$,
\begin{align}\nonumber
&n^{3/2}\sum_{k=N+1}^{n/2}\sum_{z\in \Z^2\colon z_1<0}\P(x+S(k)=z,\tau_x>k,\sigma_x=k)
\P(z+S(n-k)=y,\tau_z>n-k)\\
&\hspace{2cm}
\leq C_2\varepsilon_N.\label{t09}
\end{align}
To estimate the sum over $k> n/2$ we notice that
\begin{align}
&\sum_{k=n/2}^{n-1}\sum_{z\in \Z^2\colon z_1<0}
\P(x+S(k)=z,\tau_x>k,\sigma_x=k)
\P(z+S(n-k)=y,\tau_z>n-k)\nonumber\\
&\hspace{1cm}=\P(x+S(n)=y,\tau_x>n,\sigma_x\in (n/2,n))\nonumber\\
&\hspace{1cm}\leq \P(x+S(n)=y,\tau_x>n,S_1(n/2)<\mu_1 n/4)\nonumber\\
&\hspace{1cm}\quad 
+\P\!\left(\tau_x>n/2,\min_{k\in (n/2,n)}  [S_1(k)-S_1(n/2)]<-\mu_1 n/4) \right).\label{t07}
\end{align}
For the second probability term we have, due to Lemma~\ref{lem:loc.bound.1},
\begin{align}
&
\P\!\left(\tau_x>n/2,\min_{k\in (n/2,n)}  [(S_1(k)-S_1(n/2)]<-\mu_1 n/4) \right)\nonumber\\
&\hspace{1cm}\leq \P(\tau_x\geq n/2)\P\!\left(\min_{k\leq n}S_1(k)\leq -\mu_1 n/4\right)
\nonumber\\
&\hspace{1cm}\leq \frac{CV(x_2)}{\sqrt{n}}\P\!\left(\min _{k\ge 1}
S_1(k)\leq -\mu_1 n/4
\right).
\label{t03}
\end{align}
According to Theorem~2 in \cite{Kiefer_Wolfowitz1956}, the assumption 
$\E X_1^2<\infty$ implies that
$\E[-\min _{k\ge 1}S_1(k)]<\infty$. This implies that 
\begin{align}
\P\!\left(\min _{k\ge 1}
S_1(k)\leq -\mu_1 n/4
\right)=o\left(\frac{1}{n}\right).
\end{align}
This and \eqref{t03} imply that
\begin{align}
\P\!\left(\tau_x>n/2,\min_{k\in (n/2,n)}  [(S_1(k)-S_1(n/2)]<-\mu_1 n/4
\right)=o(n^{-3/2}).\label{t06}
\end{align}
By the Markov property at time $n/2$,
\begin{align*}
&\P(x+S(n)=y,\tau_x>n,x_1+S_1(n)\leq \mu_1 n/4)\nonumber\\
&\leq \sum_{z\in \Z^2\colon z_1<\mu_1 n/4}
\P(x+S(n/2)  =z,\tau_x>n/2 )\P(z+S(n/2)=y).
\end{align*}
The fact that $\P(z+S(n/2) =y )\leq C/n$
uniformly in $z,y$ and Proposition~\ref{t05} prove that
\begin{align}
&
\P(x+S(n)=y,\tau_x>n,x_1+S_1(n)\leq \mu_1 n/4)\nonumber\\
&
\leq \frac{C}{n}\P(x_1+S_1(n/2) \leq \mu_1 n/4,\tau_x> n/2)= o(n^{-3/2}).\label{t04}
\end{align}
Plugging \eqref{t04} and \eqref{t06} into \eqref{t07} we have
\begin{align}\label{t10}
&\nonumber
\sum_{k=n/2}^{n-1}\sum_{z\in \Z^2\colon z_1<0}
\P(x+S(k)=z,\tau_x>k,\sigma_x=k)
\P(z+S(n-k)=y,\tau_z>n-k)\\
&\hspace{2cm}=o(n^{-3/2})
\end{align} 
uniformly in $y$. Thus, it remains to consider the case $k\leq N$. According to Proposition~\ref{d07} we have
\begin{align*}
n^{3/2}\P(z+S(n-k)=y,\tau_z>n-k)-p\!\left(\frac{y-n\mu}{\sqrt{n}}\right)V(z_2) \to 0
\end{align*}
uniformly in $y\in\overline{D}_{n-k}(z)$ for every fixed $z$. Noting now that Corollary~\ref{d06} allows one to use Lebesgue's theorem,
\begin{align*}
&
n^{3/2}\sum_{z\in \Z^2\colon z_1<0}\P(x+S(k)=z,\tau_x>k,\sigma_x=k)
\P(z+S(n-k)=y,\tau_z>n-k)\nonumber\\
&\quad 
-p\!\left(\frac{y-n\mu}{\sqrt{n}}\right)
\E \!\left[V(x_2+S_2(\sigma_x)),\tau_x>k,\sigma_x=k\right]\to 0
\end{align*} 
uniformly in $y\in\overline{D}_n(x)$. Summing over all $k\leq N$ we get
\begin{align}
&
n^{3/2}\sum_{k=1}^{N}\sum_{z\in \Z^2\colon z_1<0}\P(x+S(k)=z,\tau_x>k,\sigma_x=k)
\P(z+S(n-k)=y,\tau_z>n-k)\nonumber\\
&\quad 
-p\!\left(\frac{y-n\mu}{\sqrt{n}}\right)
\E \!\left[V(x_2+S_2(\sigma_x)),\tau_x>k,\sigma_x\leq N\right]\to 0.\label{t08}
\end{align} 
Combining
\eqref{t01}, \eqref{t09}, \eqref{t10}, and \eqref{t08},
letting $N\to\infty$ and recalling that
$$
\lim_{N\to\infty}
\E \!\left[V(x_2+S_2(\sigma_x)),\tau_x>k,\sigma_x\leq N\right]
\to W(x)
$$
we complete the proof of Theorem~\ref{t11}. 
\subsection{Proof of Theorem~\ref{t12}}
By the Markov property at time $n/2$ we have
\begin{align*}
&
\P(x+S(n)=y, T_x>n)\nonumber\\
&=\sum_{z\in D_{n/2}(x)}
\P(x+S(n/2)=z,T_x>n/2)\nonumber\\
&\qquad \qquad\qquad
\P(z+S(n/2)=y,z+S(k)\in \N^2\,
\text{ for all }\,k\leq n/2
).
\end{align*}
Inverting the time in the second half of the trajectory we have
\begin{align*}
&
\P(x+S(n)=y, T_x>n)\nonumber\\
&=\sum_{z\in D_{n/2}(x)}
\P(x+S(n/2)=z,T_x>n/2)\P(y+S'(n/2)=z,T'_y>n/2),
\end{align*}
where $S'=-S$ and $T'_y=\inf\{k\in \N\colon y+S'(k)\notin \N^2\}$. Applying Theorem~\ref{t11}, we have
\begin{align*}
&\P(x+S(n)=y,T_x>n)\nonumber\\
&\hspace{1cm}\nonumber
= d_1d_2\varkappa\sum_{z\in D_{n/2}(x)}
\frac{W(x)}{(n/2)^{3/2}}
p\!\left(\frac{z-n\mu/2}{\sqrt{n/2}}\right)
\P(y+ S'(n/2)=z,T_y'>n/2)\\
&\hspace{2cm}+ o \!\left(n^{-3/2} \P(\tau'_y>n/2) \right).
\end{align*}
Recalling that $\P(\tau'_y>n/2) \leq \frac{CH(y_2)}{\sqrt{n}}$
we conclude that, uniformly in $y$,
\begin{align*}
&\P(x+S(n)=y,T_x>n)\nonumber\\
&
= d_1d_2\varkappa\sum_{z\in D_{n/2}(x)}
\frac{W(x)}{(n/2)^{3/2}}
p\!\left(\frac{z-n\mu/2}{\sqrt{n/2}}\right)
\P(y+ S'(n/2)=z,T_y'>n/2)+o\!\left(\frac{H(y_2)}{n^2}\right).
\end{align*}
We next notice that
\begin{align*}
& \sum_{z\in D_{n/2}(x)}
\frac{W(x)}{(n/2)^{3/2}}
p\!\left(\frac{z-n\mu/2}{\sqrt{n/2}}\right)
\P(y+ S'(n/2)=z,T_y'>n/2)\nonumber\\
&\hspace{1cm}
=
 \sum_{z\in \overline{D}_{n/2}(x)}
\frac{W(x)}{(n/2)^{3/2}}
p\!\left(\frac{z-n\mu/2}{\sqrt{n/2}}\right)
\P(y+ S'(n/2)=z,\tau_y'>n/2)
\nonumber\\
&\hspace{1cm}\quad +O(n^{-3/2}\P(\tau'_y>n/2,T'_y\leq n/2) ).
\end{align*}
For all $y$ with $y_1>3\mu n/4$
 we have that
\begin{align*}
\P\!\left(T'_y\leq n/2,\tau'_y>n/2\right)\leq \P\!\left(\min_{k\leq n/2}
S_1'(k)\leq -3\mu_1 n/4 \right)=o(1/n).
\end{align*}
As a result, uniformly in $y$ such that $y_1>\frac{3}{4}\mu_1 n$ and $y_2=o(\sqrt{n})$,
\begin{align*}
&
\P(x+S(n)=y, T_x>n)\nonumber\\
&=2^{3/2}d_1d_2\varkappa\sum_{z\in \N^2}\frac{W(x)}{n^{3/2}}p\!\left(\frac{z-n\mu/2}{\sqrt{n/2}}\right)\P(y+S'(n/2)=z,\tau'_y>n/2)
+o\!\left(\frac{y_2W(x)}{n^2}\right)
\nonumber\\
&=2d_1d_2\varkappa\varkappa'\frac{H(y_2)W(x)}{n^{2}}
\sum_{z\in \N^2}p\!\left(\frac{z-n\mu/2}{\sqrt{n/2}}\right)\P\!\left(y+S'(n/2)=z\middle|\tau'_y>n/2\right)
+o\!\left(\frac{H(y_2)W(x)}{n^2}\right)\nonumber\\
&=2d_1d_2\varkappa\varkappa'\frac{H(y_2)W(x)}{n^2}
\E\! \left[p\!\left(\frac{y+S'(n/2)-n\mu /2}{\sqrt{n/2}}\right)\middle|\tau'_y>n/2\right]
+o\!\left(\frac{H(y_2)W(x)}{n^2}\right).
\end{align*}  
Applying Proposition~\ref{t05} to the walk $S'$, we get the desired relation
with the function
$$
q(z_1)=2\varkappa\varkappa'\overline{q}(z_1,z_2),
$$
where 
\begin{align*}
    \bar{q}(y)=\int_{\R\times [0,\infty)}p(y+\tilde{y})p(\tilde{y})\,d\tilde{y},
    \quad y\in \R^2.
\end{align*}
Recall that $p\colon \mathbb{R}^2\to \mathbb{R}$ satisfy that
\begin{align*}
p(y)=
     \frac{y_2}{\sigma_2 \sqrt{2\pi(\sigma_1^2\sigma_2^2-\rho^2)}} \exp\!\left(-\frac{\sigma_2^2y_1^2+\sigma_1^2y_2^2-2\rho y_1y_2 }{2(\sigma_1^2\sigma_2^2-\rho^2)}\right)
\end{align*}
for all $y\in \mathbb{R}^2$. Let us calculate $\bar{q}(y_1,0)$ for $y_1\in \mathbb{R}$. We have
\begin{align*}
    \bar{q}(y_1, 0) = \int_{-\infty}^{\infty} \int_{0}^{\infty} p(y_1 + \tilde{y}_1, \tilde{y}_2) p(\tilde{y}_1, \tilde{y}_2) \, d\tilde{y}_2 \, d\tilde{y}_1
\end{align*}
for  $y_1\in \mathbb{R}$. 
To shorten the notation, let $D = \sigma_1^2\sigma_2^2 - \rho^2$.
Then
$$p(y_1 + \tilde{y}_1, \tilde{y}_2) = \frac{\tilde{y}_2}{\sigma_2 \sqrt{2\pi D}} \exp\left( -\frac{\sigma_2^2 (y_1 + \tilde{y}_1)^2 + \sigma_1^2 \tilde{y}_2^2 - 2\rho (y_1 + \tilde{y}_1)\tilde{y}_2}{2D} \right)$$ and
$$p(\tilde{y}_1, \tilde{y}_2) = \frac{\tilde{y}_2}{\sigma_2 \sqrt{2\pi D}} \exp\left( -\frac{\sigma_2^2 \tilde{y}_1^2 + \sigma_1^2 \tilde{y}_2^2 - 2\rho \tilde{y}_1 \tilde{y}_2}{2D} \right).$$
Since \begin{align*}
- \frac{(\sigma_2^2 y_1 - 2\rho \tilde{y}_2)^2}{2\sigma_2^2} = -\frac{\sigma_2^4 y_1^2 - 4\rho \sigma_2^2 y_1 \tilde{y}_2 + 4\rho^2 \tilde{y}_2^2}{2\sigma_2^2} = -\frac{\sigma_2^2 y_1^2}{2} + 2\rho y_1 \tilde{y}_2 - \frac{2\rho^2 \tilde{y}_2^2}{\sigma_2^2}
\end{align*} we have
\begin{align*}
&
\left[ \sigma_2^2 (y_1 + \tilde{y}_1)^2 + \sigma_1^2 \tilde{y}_2^2 - 2\rho (y_1 + \tilde{y}_1)\tilde{y}_2 \right] + \left[ \sigma_2^2 \tilde{y}_1^2 + \sigma_1^2 \tilde{y}_2^2 - 2\rho \tilde{y}_1 \tilde{y}_2 \right]\\
& = \sigma_2^2 y_1^2 + 2\sigma_2^2 y_1 \tilde{y}_1 + \sigma_2^2 \tilde{y}_1^2 + \sigma_1^2 \tilde{y}_2^2 - 2\rho y_1 \tilde{y}_2 - 2\rho \tilde{y}_1 \tilde{y}_2 + \sigma_2^2 \tilde{y}_1^2 + \sigma_1^2 \tilde{y}_2^2 - 2\rho \tilde{y}_1 \tilde{y}_2\\
&=
2\sigma_2^2 \tilde{y}_1^2 + 2\tilde{y}_1 (\sigma_2^2 y_1 - 2\rho \tilde{y}_2) + (\sigma_2^2 y_1^2 + 2\sigma_1^2 \tilde{y}_2^2 - 2\rho y_1 \tilde{y}_2)\\
&=
2\sigma_2^2 \tilde{y}_1^2 + 2\tilde{y}_1 (\sigma_2^2 y_1 - 2\rho \tilde{y}_2)+(\sigma_2^2 y_1^2 + 2\sigma_1^2 \tilde{y}_2^2 - 2\rho y_1 \tilde{y}_2)
\\&=2\sigma_2^2 \left[ \tilde{y}_1^2 + 2\tilde{y}_1 \left( \frac{\sigma_2^2 y_1 - 2\rho \tilde{y}_2}{2\sigma_2^2} \right) \right]+(\sigma_2^2 y_1^2 + 2\sigma_1^2 \tilde{y}_2^2 - 2\rho y_1 \tilde{y}_2)
&\\
&
=
2\sigma_2^2 \left[ \tilde{y}_1 + \frac{\sigma_2^2 y_1 - 2\rho \tilde{y}_2}{2\sigma_2^2} \right]^2 - 2\sigma_2^2 \left( \frac{\sigma_2^2 y_1 - 2\rho \tilde{y}_2}{2\sigma_2^2} \right)^2
+(\sigma_2^2 y_1^2 + 2\sigma_1^2 \tilde{y}_2^2 - 2\rho y_1 \tilde{y}_2)
\\
&=
2\sigma_2^2 \left[ \tilde{y}_1 + \frac{\sigma_2^2 y_1 - 2\rho \tilde{y}_2}{2\sigma_2^2} \right]^2  -\frac{\sigma_2^2 y_1^2}{2} + 2\rho y_1 \tilde{y}_2 - \frac{2\rho^2 \tilde{y}_2^2}{\sigma_2^2}
+(\sigma_2^2 y_1^2 + 2\sigma_1^2 \tilde{y}_2^2 - 2\rho y_1 \tilde{y}_2)\\
& = 2\sigma_2^2 \left[ \tilde{y}_1 + \frac{\sigma_2^2 y_1 - 2\rho \tilde{y}_2}{2\sigma_2^2} \right]^2 + \frac{\sigma_2^2 y_1^2}{2} + \frac{2(\sigma_1^2\sigma_2^2-\rho^2) \tilde{y}_2^2}{\sigma_2^2}\\
& = 2\sigma_2^2 \left[ \tilde{y}_1 + \frac{\sigma_2^2 y_1 - 2\rho \tilde{y}_2}{2\sigma_2^2} \right]^2 + \frac{\sigma_2^2 y_1^2}{2} + \frac{2D \tilde{y}_2^2}{\sigma_2^2}.
\end{align*}
Thus,
\begin{align*}
    p(y_1 + \tilde{y}_1, \tilde{y}_2) p(\tilde{y}_1, \tilde{y}_2) = \frac{\tilde{y}_2^2}{2\pi D\sigma_2^2}\exp\left(-\frac{2\sigma_2^2 \left[ \tilde{y}_1 + \frac{\sigma_2^2 y_1 - 2\rho \tilde{y}_2}{2\sigma_2^2} \right]^2 + \frac{\sigma_2^2 y_1^2}{2} + \frac{2D \tilde{y}_2^2}{\sigma_2^2}}{2D}\right)
\end{align*}
and
$$\bar{q}(y_1, 0) = \frac{1}{2\pi \sigma_2^2 D} \int_{0}^{\infty} \tilde{y}_2^2 \exp\left( -\frac{\sigma_2^2 y_1^2/2 + 2D \tilde{y}_2^2 / \sigma_2^2}{2D} \right) \left[ \int_{-\infty}^{\infty} \exp\left( -\frac{2\sigma_2^2 (\tilde{y}_1 + \frac{\sigma_2^2 y_1 - 2\rho \tilde{y}_2}{2\sigma_2^2})^2}{2D} \right) d\tilde{y}_1 \right] d\tilde{y}_2$$
Noting that
\begin{align*}
     \int_{-\infty}^{\infty} \exp\left( -\frac{\sigma_2^2 (\tilde{y}_1 + \frac{\sigma_2^2 y_1 - 2\rho \tilde{y}_2}{2\sigma_2^2})^2}{D} \right) d\tilde{y}_1 =\sqrt{\frac{\pi D}{\sigma_2^2}}
\end{align*}
we have that
\begin{align*}
\bar{q}(y_1, 0) &= \frac{1}{2\pi \sigma_2^2 D} \int_{0}^{\infty} \tilde{y}_2^2 \exp\left( -\frac{\sigma_2^2 y_1^2/2 + 2D \tilde{y}_2^2 / \sigma_2^2}{2D} \right)\sqrt{\frac{\pi D}{\sigma_2^2}} d\tilde{y}_2\\& = \frac{\sqrt{\pi D}/\sigma_2}{2\pi \sigma_2^2 D} \exp\left( -\frac{\sigma_2^2 y_1^2}{4D} \right) \int_{0}^{\infty} \tilde{y}_2^2 \exp\left( -\frac{\tilde{y}_2^2}{\sigma_2^2} \right) d\tilde{y}_2
\end{align*}
where
\begin{align*}
   \int_{0}^{\infty} \tilde{y}_2^2 \exp\left( -\frac{\tilde{y}_2^2}{\sigma_2^2} \right) d\tilde{y}_2 = \frac{\sigma_2^3 \sqrt{\pi}}{4}.
\end{align*}
Hence,
$$\bar{q}(y_1, 0) = \left( \frac{\sqrt{\pi D}}{2\pi \sigma_2^3 D} \right) \cdot \left( \frac{\sigma_2^3 \sqrt{\pi}}{4} \right) \cdot \exp\left( -\frac{\sigma_2^2 y_1^2}{4D} \right).$$
Recalling that $D = \sigma_1^2\sigma_2^2 - \rho^2$, we have 
\begin{equation}\label{eq:q-def}
\bar{q}(y_1, 0) = \frac{1}{8\sqrt{\sigma_1^2\sigma_2^2 - \rho^2}} \exp\left( -\frac{\sigma_2^2 y_1^2}{4(\sigma_1^2\sigma_2^2 - \rho^2)} \right).
\end{equation}
\subsection{Proof of Corollary~\ref{t15}}

We start by noting that 
\begin{align}
    \P(x_2+S_2(n)=y_2,T_x>n)
    &=\P(x_2+S_2(n)=y_2,\lvert S_1(n)-n\mu\rvert\leq A\sqrt{n},T_x>n)\nonumber\\
    &\quad +
    \P(x_2+S_2(n)=y_2,\lvert S_1(n)-n\mu\rvert>A\sqrt{n},T_x>n)\label{t14}
\end{align}
where $A\geq 1$ is any fixed number. Applying Theorem~\ref{t12} we have
\begin{align}
    &\P(x_2+S_2(n)=y_2,\lvert S_1(n)-n\mu\rvert\leq A\sqrt{n},T_x>n)\nonumber\\
    &\sim d_1d_2\frac{H(y_2)W(x)}{n^2}\sum_{y_1\in \Z\colon \lvert y_1-x_1-n\mu\rvert\leq A\sqrt{n}} q\!\left(\frac{y_1-n\mu}{\sqrt{n}}\right)\nonumber\\
    &\sim d_1d_2\frac{H(y_2)W(x)}{n^{3/2}}\int_{-A}^{A} q(z)\,dz.
    \label{t13}
\end{align}
We split the second probability term on the r.h.s. of \eqref{t14} into two parts
\begin{align*}
    &\P(x_2+S_2(n)=y_2,\lvert S_1(n)-n\mu\rvert>A\sqrt{n},T_x>n)\nonumber\\
    &\leq 
    \P\!\left(x_2+S_2(n)=y_2,\left\lvert S_1(\frac{n}{2})-\frac{n\mu}{2}\right\rvert>\frac{A\sqrt{n}}{2},\tau_x>n\right)\nonumber\\
    &\quad +\P\!\left(x_2+S_2(n)=y_2,\left\lvert S_1(n)-S_1(\frac{n}{2})-\frac{n\mu}{2}\right\rvert>\frac{A\sqrt{n}}{2},\tau_x>n\right).   
\end{align*}
By the Markov property at time $n/2$,
\begin{align*}
    &\P\!\left(x_2+S_2(n)=y_2,\left\lvert S_1(\frac{n}{2})-\frac{n\mu}{2}\right\rvert>\frac{A\sqrt{n}}{2},\tau_x>n\right)\nonumber\\
    &=\sum_{z_2=1}^\infty
    \P\!\left(x_2+S_2(\frac{n}{2})=z_2,\tau_x\geq \frac{n}{2},\left\lvert S_1(\frac{n}{2})-\frac{n\mu}{2}\right\rvert>\frac{A\sqrt{n}}{2}\right)
    \P\!\left(y_2+S'(\frac{n}{2})=z_2,\tau'_y> \frac{n}{2}\right)
    \nonumber\\
     &\leq C\frac{H(y_2)}{n}\P\!\left(
    \left\lvert S_1(\frac{n}{2})-\frac{n\mu}{2}\right\rvert>\frac{A\sqrt{n}}{2},\tau_x>\frac{n}{2}
    \right)\nonumber\\
     &\leq C\frac{V(x_2)H(y_2)}{n^{3/2}}\P\!\left(
    \left\lvert S_1(\frac{n}{2})-\frac{n\mu}{2}\right\rvert>\frac{A\sqrt{n}}{2}\middle|\tau_x>\frac{n}{2}
    \right).
\end{align*}
It follows from Theorem~\ref{t05} that
\begin{align*}
\limsup_{n\to\infty}
\P\!\left(
    \left\lvert S_1(\frac{n}{2})-\frac{n\mu}{2}\right\rvert>\frac{A\sqrt{n}}{2}\middle|\tau_x>\frac{n}{2}
    \right)\leq \delta(A)
\end{align*}
with some $\delta(A)\to 0$ as $A\to \infty$. As a result,
\begin{align*}
&\P\!\left(x_2+S_2(n)=y_2,\left\lvert S_1(\frac{n}{2})-\frac{n\mu}{2}\right\rvert>\frac{A\sqrt{n}}{2},\tau_x>n\right)\nonumber\\
&\leq C\delta(A)\frac{V(x_2)H(y_2)}{n^{3/2}}.
\end{align*}
By the symmetry,
\begin{align*}
    &\P\!\left(x_2+S_2(n)=y_2,\left\lvert S_1(n)-S_1(\frac{n}{2})-\frac{n\mu}{2}\right\rvert>\frac{A\sqrt{n}}{2},\tau_x>n\right)\nonumber\\
    &\leq C\delta(A)\frac{V(x_2)H(y_2)}{n^{3/2}}.
\end{align*}
Combining these two bounds we obtain
\begin{align*}
    &\P(x_2+S_2(n)=y_2,\lvert S_1(n)-n\mu\rvert>A\sqrt{n},T_x>n)\nonumber\\
    &\leq C\delta(A)\frac{V(x_2)H(y_2)}{n^{3/2}}.
\end{align*} This and \eqref{t13} complete the proof of Corollary~\ref{t15}.

\subsection{Singular walks ending at one of the axes}
In this paragraph we show how to apply our probabilistic results to the enumeration problems of lattice paths in the positive quadrant with possible steps
$(-1,1), (1,1)$ and $(1,-1)$. Let $N_n(x,y)$ denote the number of paths of length $n$ which start at $x$, end at $y$ and do not leave the positive quadrant $\N^2$.
Then we have
$$
N_n(x,y)=3^n\P(x+S(n)=y,T_x>n),
$$
where $S(n)$ is a random walk with i.i.d. increments which have the uniform distribution on the set of possible steps: 
$$
\P( X(1)=(1,1))=\P( X(1)=(-1,1))= \P( X(1)=(1,-1))=\frac{1}{3}.
$$
This random walk has positive drift: $\E[X_1(1)]=\E[X_2(1)]=\frac{1}{3}$. In order to apply our results and to determine the asymptotic behaviour of the probability$\P ((1,1)+S(n)=(m,1), T_x>n)$ in the case when $n\to\infty$ and 
$\frac{m}{n}\to\mu_1\in(0,1)$, we perform an exponential change of measure.
Set 
\begin{align*}
\varphi(h)= \E[ e^{h_1X_1(1)+h_2X_2(1)}]=
\frac{1}{3}\left(e^{h_1-h_2}+e^{h_1+h_2}+e^{-h_1+h_2}\right),
\quad h=(h_1,h_2)\in \R^2.
\end{align*} 
Let $\P^{(h)}$ denote the probability measure under which $X(1),X(2),\ldots$ are i.i.d. with the common distribution
\begin{align*}
    &\P^{(h)}(X(1)=(1,-1))=\frac{e^{h_1-h_2}}{3\varphi(h)},\\ 
    &\P^{(h)}(X(1)=(1,1))=\frac{e^{h_1+h_2}}{3\varphi(h)},\\ 
    &\P^{(h)}(X(1)=(-1,1))=\frac{e^{-h_1+h_2}}{3\varphi(h)},
\end{align*}
and $\E^{(h)}$  be the corresponding expectation. To apply Theorem~\ref{t12} with $y_2=1$ we have to  find $h=(h_1,h_2)$ such that 
$\E ^{(h)}[X_2(1)]=0$ and $\E^{(h)}[X_1(1)]=\mu_1$. 
From $\E ^{(h)}[X_2(1)]=0$ we get
\begin{align*}
0=\P^{(h)}(X_2(1)=1)-\P^{(h)}(X_2(1)=-1)= \frac{e^{h_1+h_2}+e^{-h_1+h_2}-e^{h_1-h_2}}{3\varphi(h)}.
\end{align*}
From
$\E^{(h)} [X_1(1)]=\mu_1$ we obtain
\begin{align*}
 \mu_1=\P^{(h)} (X_1(1)=1)-\P^{(h)} (X_1(1)=-1)= \frac{
e^{h_1+h_2}+e^{h_1-h_2}-e^{h_2-h_1}}{3\varphi(h)}
\end{align*}
and hence $$e^{h_1+h_2}+e^{h_1-h_2}-e^{h_2-h_1}=3{\mu}_1\varphi(h).$$
To summarize we have a system of two equations:
\begin{align*}
&    e^{h_1+h_2}+e^{-h_1+h_2}-e^{h_1-h_2}=0,\\
&
    e^{h_1+h_2}+e^{h_1-h_2}-e^{h_2-h_1}={\mu}_1\left(e^{h_1-h_2}+e^{h_1+h_2}+e^{-h_1+h_2}\right).
\end{align*}
Letting $x=e^{h_1}>0$, $y=e^{h_2}>0$ we obtain
\begin{align}\label{c01a}
    &xy+\frac{y}{x}-\frac{x}{y}=0,
\\
&xy+\frac{x}{y}-\frac{y}{x}= {\mu}_1 \left( xy+\frac{x}{y}+\frac{y}{x}\right).
    \label{c02a}
\end{align}Multiplying \eqref{c01a} with $xy$ implies that
$x^2y^2+y^2-x^2=0$, i.e., $$y^2=\frac{x^2}{x^2+1}.$$ Multiplying \eqref{c01a} with $xy$ implies
that $x^2y^2+x^2-y^2={\mu}_1(x^2y^2+x^2+y^2)$, i.e.,
$
(x^2-1)y^2+x^2= {\mu}_1((x^2+1)y^2+x^2).
$
Plugging $y^2=\frac{x^2}{x^2+1}$ in this equation we get
that
$\frac{(x^2-1)x^2}{x^2+1}+x^2= 2x^2{\mu}_1 $. Hence,
$\frac{x^2-1}{x^2+1}+1=2{\mu}_1$. Thus,
$\frac{x^2}{x^2+1}={\mu}_1$. Therefore,
$x^2={\mu}_1x^2+{\mu}_1$ and we get
$x^2=\frac{{\mu}_1}{1-{\mu}_1}$. This and the fact that $y^2=\frac{x^2}{x^2+1}$ imply that
$$y^2=\frac{\frac{{\mu}_1}{1-{\mu}_1}}{\frac{{\mu}_1}{1-{\mu}_1}+1}={\mu}_1.$$
We conclude that
$h_1=\log x=\frac{1}{2}\log(\frac{{\mu}_1}{1-{\mu}_1}) $, $h_2=\log y=\frac{1}{2}\log {\mu}_1$,  
\begin{align}
    3\varphi(h)=xy+\frac{x}{y}+\frac{y}{x}=\frac{{\mu}_1}{\sqrt{1-{\mu}_1}}+\frac{1}{\sqrt{1-{\mu}_1}}+\sqrt{1-{\mu}_1}=\frac{2}{\sqrt{1-{\mu}_1}},\label{c06}
\end{align}
and\begin{align*}
    e^{-h_1(m-1)}= \frac{1}{x^{m-1}}=\left(\frac{1-{\mu}_1}{{\mu}_1}\right)^{\frac{m-1}{2}}.
\end{align*}
Next, assuming that $m-\mu_1 n=o(\sqrt{n})$ and applying Theorem \ref{t12}, we obtain
\begin{align}
    \P^{(h)}\!\left((1,1)+S(n)=(m,1),T_{(1,1)}>n\right)= \frac{c}{n^2}(1+o(1))\label{c07}
\end{align} where $c$ is a constant depending only on $h$, i.e., on $\mu_1$.
Next,
\begin{align*}
    \P \left((1,1)+S(n)=(m,1),T_{(1,1)}>n\right)=(\varphi(h))^n
    e^{-h_1(m-1)}\P^{(h)}\!\left((1,1)+S(n)=(m,1),T_{(1,1)}>n\right).
\end{align*}
This and 
\eqref{c06}--\eqref{c07}
prove that
\begin{align*}
\P \left((1,1)+S(n)=(m,1),T_{(1,1)}>n\right)= \left(\frac{2}{3\sqrt{1-{\mu}_1}}\right)^n\left(\frac{1-{\mu}_1}{{\mu}_1}\right)^{\frac{m-1}{2}}\frac{c}{n^2}(1+o(1)).
\end{align*}    

We now derive asymptotics for the number $M_n(x)$ of walk of length $n$ which start at a point $x$ and end at the line $\{(m,1),\ m\ge1\}$. By the definition of the measure $\P^{(h)}$,
\begin{align*}
 M_n(x)
 &=3^n\P(x_2+S_2(n)=1,T_x>n)\\
 &=3^n\sum_{m=1}^\infty\P(x+S(n)=(m,1),T_x>n)\\
 &=3^n\sum_{m=1}^\infty (\varphi(h))^ne^{-h_1(m-x_1)}e^{-h_2(1-x_2)}
        \P^{(h)}(x+S(n)=(m,1),T_x>n).
\end{align*}
In the particular case $h_1=0$ we then have 
\begin{align}
\label{eq:M-repr}
M_n(x)=(3\varphi(h))^ne^{h_2(x_2-1)}\P^{(h)}(x_2+S_2(n)=1,T_x>n)
\quad\text{for }h=(0,h_2).
\end{align}
In order to apply Corollary~\ref{t15} we have to choose $h_2$ so that
$\E^{(h)}[X_2(1)]=0$. The corresponding to this condition equation \eqref{c01a}
in the case $h_1=0(x=1)$ reduces to $2y-y^{-1}=0$. Thus, $y=2^{-1/2}$ and, consequently, $h_2=-\frac{1}{2}\log2$. Set, for brevity,
$h^*=(0,\frac{1}{2}\log 2)$. Then, using \eqref{eq:M-repr} with $h=h^*$ and noting that $\varphi(h^*)=\frac{2\sqrt{2}}{3}=\frac{2^{3/2}}{3}$, we obtain
$$
M_n(x)=2^{3n/2}2^{(1-x_2)/2}\P^{(h^*)}(x_2+S_2(n)=1,T_x>n).
$$
Applying now Corollary~\ref{t15}, we infer that there exists a constant $C_0$ such that 
$$
M_n(x)\sim C_0 W^*(x)2^{(1-x_2)/2}\frac{2^{3n/2}}{n^{3/2}}
\quad\text{as }n\to\infty,
$$
where
$$
W^*(x)=x_2-\E^{(h^*)}[x_2+S_2(\sigma_x);\tau_x>\sigma_x;\sigma_x<\infty].
$$
Here we have also used the fact that the renewal functions $V(x)$ and $H(x)$ are linear for the simple symmetric random walk $S_2(n)$.

{
\bibliographystyle{acm}
\bibliography{literature}
}

\end{document}